\def\no{\noindent}
\def\pmatrix{\left(\begin{array}}
\def\endpmatrix{\end{array}\right)}
\newtheorem{theo}{Theorem}
\newtheorem{rem}{Remark}
\newtheorem{defi}{Definition}
\newtheorem{prop}{Proposition}
\newtheorem{assum}{Assumption}
\newcommand{\norm}[1]{\left\Vert#1\right\Vert}
\title{Long-term analysis of   exponential integrators  for highly oscillatory conservative systems}%\thanks{This work is supported by Universit\`a di Firenze (project ``Risoluzione numerica di problemi Hamiltoniani ed applicazioni'') and NSFC (Grant No.\,11571128).}}
\author{Bin Wang\,
\footnote{School of Mathematical Sciences, Qufu Normal University,
Qufu 273165, P.R. China; Mathematisches Institut, University of
T\"{u}bingen, Auf der Morgenstelle 10, 72076 T\"{u}bingen, Germany.
The research is supported in part by the Alexander von Humboldt
Foundation and by the Natural Science Foundation of Shandong
Province (Outstanding Youth Foundation) under Grant ZR2017JL003.
E-mail:~{\tt wang@na.uni-tuebingen.de} } \and Jiyong
Li\thanks{College of Mathematics and Information Science, Hebei
Normal University, Shijiazhuang   050024, P.R.China E-mail:~{\tt
ljyong406@163.com}} \and Yonglei Fang\thanks{School of Mathematics
and Statistics, Zaozhuang University, Zaozhuang   277160, P.R.China
E-mail:~{\tt ylfangmath@163.com}}
 }
\begin{document}
\maketitle

\begin{abstract}In this paper, we investigate the long-time near-conservations of
energy and kinetic energy by the widely used exponential integrators
to highly oscillatory conservative systems. The modulated Fourier
expansions of two kinds of exponential integrators have been
constructed and the long-time numerical conservations of energy and
kinetic energy are obtained by deriving two almost-invariants of the
expansions. Practical examples of the  methods are given and the
theoretical results are confirmed and demonstrated by a numerical
experiment.
\medskip

\no{\bf Keywords:}  highly oscillatory conservative systems,
modulated Fourier expansion,  exponential integrators, long-time
  energy conservation
\medskip
\no{\bf MSC:}65P10, 65L05

\end{abstract}

\section{Introduction}\label{intro}
In this paper, we are  concerned with the long-term analysis of
implicit exponential integrators for solving the systems of   the
form
\begin{equation}\label{IVPPP}
y^{\prime}(t)=Q \nabla H(y(t)),\quad
y(0)=y_{0}\in\mathbb{R}^{d},\quad t\in[0,T],
\end{equation}
where $Q$ is a $d \times d$ skew symmetric matrix, and $H:
\mathbb{R}^{d}\rightarrow\mathbb{R}$ is defined  by
\begin{equation}\label{H}
 H(y)=  \dfrac{1}{2}y^{\intercal}\big(\frac{1}{\epsilon}M\big)y+V(y).
\end{equation}
Here $\epsilon$ is a small parameter satisfying $0<\epsilon\ll 1$,
$M$ is a $d \times d$ symmetric real matrix, and $V:
\mathbb{R}^{d}\rightarrow\mathbb{R}$ is a differentiable function.
It is important to note that since $Q$ is skew symmetric, the system
\eqref{IVPPP}   is a conservative system with the first integral
$H$: i.e.,
$$H(y(t))\equiv H(y_0)\qquad \textmd{for}\ \    \textmd{any}\ \  t\in[0,T].$$
The kinetic energy of the system \eqref{IVPPP} is given by
$$K(y)=\dfrac{1}{2}y^{\intercal}\big(\frac{1}{\epsilon}M\big)y.$$

For brevity,   by letting
\begin{equation*}\label{AG}
\Omega=\frac{1}{\epsilon}QM,\ \ g(y(t))=Q\nabla V(y(t)),
\end{equation*}
 the system \eqref{IVPPP} can be     rewritten as
\begin{equation}\label{IVP1}
y^{\prime}(t)=\Omega y(t)+g(y(t)),\quad y(0)=y_{0}\in\mathbb{R}^{d}.
\end{equation}
%Then the following relation holds
%\begin{equation*}\label{rel1}
% Ay(t)+g(y(t))=Q\nabla H(y(t)).
%\end{equation*}
It is well known that the exact solution of \eqref{IVPPP} or
\eqref{IVP1} can be represented by the variation-of-constants
formula
  \begin{equation}
 y(t)= e^{t\Omega}y_0+  t\int_{0}^1 e^{(1-\tau)t\Omega}g(y(\tau t))d\tau.\\
\label{VOC}%
\end{equation}
In the analysis of this paper, it is assumed that the matrix
$\Omega$ is symmetric negative definite or skew-Hermitian with
eigenvalues of large modulus. Under these conditions,  the
exponential $e^{t\Omega}$ enjoys favourable properties such as
uniform boundedness, independent of the time step $t$ (see
\cite{Hochbruck2010}).

  The highly oscillatory system \eqref{IVP1} often arises
 in a wide range of applications such as in engineering,
astronomy,  mechanics,  physics and molecular dynamics (see, e.g.
\cite{hairer2006,Hochbruck2010,wang2017-JCM,wubook2018,wu2013-book}).
There are also some semidiscrete PDEs  such as semilinear
Schr\"{o}dinger equations   fit  this form.
 In recent decades,  as   an efficient approach to
integrating \eqref{IVP1},  exponential  integrators have been widely
 investigated  and developed,  and the reader is   referred to
 \cite{Butcher09,Calvo-2006,Celledoni-2008,Grimm06,Hochbruck2005,Hochbruck2009,Mei2017,Li_Wu(sci2016),wang-2016,IMA2018,wu2017-JCAM,wang2017-Cal}
for example.  A systematic survey of exponential integrators is
referred to \cite{Hochbruck2010}. One important advantage of
exponential integrators is that they make well use of the
variation-of-constants formula \eqref{VOC}, and can  performance
very well even for highly oscillatory problems.

On the other hand,  an important aspect in the numerical simulation
of conservative systems is the approximate conservation of the
invariants  over long times. In order to study the long-time
behaviour for  numerical methods/differential equations,   modulated
Fourier expansion was firstly developed  in \cite{Hairer00}. In the
recent two decades, this  technique has been used successfully  in
the long-time analysis for various numerical methods, such as for
trigonometric integrators in
\cite{Cohen15,Cohen03,hairer2006,MFE2018}, for an implicit-explicit
method in \cite{McLachlan14siam,Stern09},  for heterogeneous
multiscale methods in \cite{Sanz-Serna09} and for splitting methods
in \cite{Gauckler17,Gauckler10-1}. Sofar modulated Fourier expansion
has been presented and developed   as an important mathematical tool
in the long-time analysis (see, e.g.
\cite{Cohen12,Cohen15,Cohen08-1,Gauckler13,Hairer16}).
 However,   for the well known exponential  integrators,  the technique of modulated
Fourier expansions has   only been used  in the long-time analysis
for cubic Schr\"{o}dinger equations (see \cite{Cohen12}). It is
noted that, until now, the long-time analysis of exponential
integrators for Hamiltonian ordinary differential equations has not
been considered in the literature, which motivates this paper.

With this promise,  in this paper, we   present the long-time
analysis of implicit exponential integrators for solving the  highly
oscillatory conservative system \eqref{VOC}.   The technique of
modulated Fourier expansions will be used as an important  tool in
the analysis.  This seems to be the first long-time result for
exponential integrators of Hamiltonian ordinary differential
equations.

We organize the rest of this paper   as follows. In Section
\ref{sec: methods},  two kinds of exponential integrators are
considered for solving  \eqref{VOC} and  an illustrative numerical
experiment is presented to show the long-time behaviour of these
methods. Then in Section \ref{sec: Long-time of scheme 1} we derive
the modulated Fourier expansion for the first class of integrators
and then obtain the long-time near conservations of energy and
kinetic energy by showing two almost-invariants. The analyses of
long time conservations for the second class of exponential
integrators
 are given in Section \ref{sec: Long-time
of scheme 2}.    Section \ref{sec:conclusions} includes the
conclusions of this paper.

\section{Exponential integrators and numerical experiment}\label{sec: methods}

\subsection{Two kinds of methods} In order to solve
\eqref{IVP1} effectively,  exponential integrators are considered
throughout this paper.

\begin{defi}
\label{scheme0} (See \cite{Hochbruck2010}).  An $s$-stage
exponential
integrator for solving  \eqref{IVP1} is given by%
\begin{equation}\label{EI0}
\left\{\begin{array}[c]{ll} &Y^{n+c_i}=e^{c_i h
\Omega}y^n+h\sum_{j=1}^{s}
 a_{ij}(h \Omega)g(Y^{n+c_j}),\qquad i=1,\ldots,s,\\
&y^{n+1}=e^{ h \Omega}y^n+h\sum_{i=1}^{s}
 b_{i}(h \Omega)g(Y^{n+c_i}),
\end{array}\right.
\end{equation}
where   $h$ is  a  stepsize,    $c_i\in[0,1]$ for $i=1,\ldots,s$ are
real constants, and $b_{i}(h \Omega)$ and $a_{ij}(h \Omega)$ for
$i,j=1,\ldots,s$ are matrix-valued and bounded functions of $h
\Omega$.
 The coefficients of
this  exponential integrator can be compactly arranged as a Butcher
Tableau
\[%
\begin{tabular}
[c]{c|c}%
$c$ & $A $\\\hline & $\raisebox{-1.3ex}[0pt]{$b^T $}$
\end{tabular}
=
\begin{tabular}
[c]{c|ccc}%
$c_1$ & $a_{11}(h \Omega)$ & $\cdots$& $a_{1s}(h \Omega)$ \\
$\vdots$ & $\vdots$ & $\vdots$& $\vdots$ \\
$c_s$ & $ a_{s1}(h \Omega)$ &   $ \cdots$&   $ a_{ss}(h \Omega)$ \\
\hline & $\raisebox{-1.3ex}[0.5pt]{$b_{1}(h \Omega)$}$  &
$\raisebox{-1.3ex}[0.5pt]{$\cdots$}$&
$\raisebox{-1.3ex}[0.5pt]{$b_{s}(h \Omega)$}$
\end{tabular}
\]
\end{defi}

As the first example,  approximating the integral in \eqref{VOC}
leads to the following exponential integrator.
\begin{defi}
\label{scheme 1}  An
exponential integrator for solving \eqref{VOC} is defined by%
\begin{equation}\label{EI-T} y^{n+1}=e^{ h \Omega}y^n+\frac{h}{2}
\big(g(y^{n+1})+e^{ h \Omega}g(y^{n})\big).
\end{equation}
This integrator is symmetric and can be considered as a two-stage
exponential integrator with the following Butcher Tableau
\[%
\begin{tabular}
[c]{c|cc}%
$0$ & $0$ & $0$ \\
$1$ & $ \frac{1}{2}e^{ h \Omega}$ &   $ \frac{1}{2}$\\\hline &
$\raisebox{-1.3ex}[0.5pt]{$ \frac{1}{2}e^{ h \Omega}$}$  &
$\raisebox{-1.3ex}[0.5pt]{$ \frac{1}{2}$}$
\end{tabular}
\]
We denote it  by EI-T.
\end{defi}

Besides this integrator, in this paper,   we  also consider
one-stage implicit exponential integrators, which are given as
follows.
\begin{defi}
\label{scheme 2}  An one-stage
implicit exponential integrator   is defined by%
\begin{equation}\label{im integ one-stage}
\left\{\begin{array}[c]{ll} &Y^{n+c_1}=e^{c_1 h \Omega}y^n+h
 a_{11}(h \Omega)g(Y^{n+c_1}),\\
&y^{n+1}=e^{ h \Omega}y^n+h
 b_{1}(h \Omega)g(Y^{n+c_1}).
\end{array}\right.
\end{equation}
This integrator is denoted by EI-O.
\end{defi}

 \renewcommand\arraystretch{1.6}
\begin{table}[!htb]$$
\begin{array}{|c|c|c|c|c|c|c|c|}
\hline%%%%%%%%%%%%%%%%%%%%%%%%%%%%%%%%%%%  parts of (f(y))^{(4)}
\text{Integrators} &c_1  &a_{11}(h\Omega)   &b_1(h\Omega)  & \text{Symmetric}  &  \text{Reversible}  &\text{Symplectic} \\
\hline%%%%%%%%%%%%%%%%%%%%%%%%%%%%%%%%%%%
%%%%%%%%%%%%%%%%%%%%%%%%%%%%%%%%%%%%%%%%%%%%%%%%%%%%%%%%%%%%%%%%
\text{EI-O1} & \frac{1}{2} & \frac{1}{2}& e^{\frac{1}{2}h \Omega}  & \text{Yes}  &\text{Yes} &\text{Yes}  \cr%%%%%%%%%%%%
 \text{EI-O2}   & \frac{1}{2} &\frac{1}{2} & \varphi_1(h \Omega)
& \text{Non} & \text{Non} &\text{Non}\cr%%%%%%%%%%%%
\text{EI-O3} & \frac{2}{3} & \frac{1}{2} &e^{\frac{1}{3}h \Omega}   & \text{Non}  &\text{Non} &\text{Yes}\cr%%%%%%%%%%%%
\text{EI-O4} & \frac{1}{2} & \frac{1}{2}b_1(h\Omega/2) &\varphi_1(h \Omega)   & \text{Yes}  &\text{Yes} &\text{Non}\cr%%%%%%%%%%%%
\text{EI-O5} & \frac{1}{2} & \frac{1}{3}& \frac{2}{3}e^{\frac{1}{2}h \Omega}  & \text{Non}  &\text{Non} &\text{Yes}  \cr%%%%%%%%%%%%
 \hline
\end{array}
$$
\caption{Five one-stage implicit exponential integrators.}
\label{praEI}
\end{table}

Five EI-O integrators  are
 listed  in Table \ref{praEI} and it follows from
 \cite{Cohen12}  that EI-O1 and EI-O4 are both
symmetric and reversible, and the others are neither symmetric nor
reversible. About the symplecticness,
 the authors in   \cite{Mei2017} proved that if a   Runge--Kutta (RK) method with the
coefficients $c_i,\bar{b}_{i}, \bar{a}_{ij}$ is symplectic, then the
   exponential integrator of the coefficients
\begin{equation}
\begin{aligned} a_{ij}(h \Omega)=\bar{a}_{ij}e^{(c_i-c_j) h \Omega},\ \ b_{i}(h \Omega)=\bar{b}_{i}e^{(1-c_i)h \Omega}
\end{aligned}
\label{rev co}%
\end{equation}
is symplectic.  We note that the integrator EI-T  can be written as
a two-stage exponential integrator satisfying \eqref{rev co} and
with
\[%
\begin{tabular}
[c]{c|c}%
$c$ & $\bar{A} $\\\hline & $\raisebox{-1.3ex}[0pt]{$\bar{b}^T $}$
\end{tabular}
=
\begin{tabular}
[c]{c|cc}%
$0$ & $0$ & $0$ \\
$1$ & $ \frac{1}{2}$ &   $ \frac{1}{2}$\\\hline &
$\raisebox{-1.3ex}[0.5pt]{$ \frac{1}{2}$}$  &
$\raisebox{-1.3ex}[0.5pt]{$ \frac{1}{2}$}$
\end{tabular}
\]
This shows that the integrator EI-T is not symplectic by considering
that the trapezoidal rule is not symplectic.
 In the light of the symplecticness condition of one-stage
RK method, one gets $\bar{b}_{1}=2\bar{a}_{11}$. Therefore, a class
of one-stage implicit symplectic exponential integrators is given by
\begin{equation}
\begin{aligned} a_{11}(h \Omega)=\bar{a}_{11},\ \ b_{1}(h
\Omega)=2\bar{a}_{11}e^{(1-c_1)h \Omega}.
\end{aligned}
\label{rev cond}%
\end{equation}
With this result, the properties of symplecticity  are shown in
Table \ref{praEI}.

\subsection{Numerical experiments}
 As an
example, we apply these  methods   to   the following averaged
system in wind-induced oscillation
\begin{equation*}
\begin{aligned}& \left(
                   \begin{array}{c}
                     x_1 \\
                      x_2 \\
                   \end{array}
                 \right)
'=\frac{1}{\epsilon} \left(
    \begin{array}{cc}
      -\zeta& -\lambda\\
       \lambda & -\zeta \\
    \end{array}
  \right)\left(
                   \begin{array}{c}
                     x_1 \\
                      x_2 \\
                   \end{array}
                 \right)+
\left(
                                                                           \begin{array}{c}
                                                                           x_1x_2\\
\frac{1}{2}(x_1^2-x_2^2)
                                                                           \end{array}
                                                                         \right),
\end{aligned}\end{equation*}
 where $\zeta \geq
0$ is a damping factor and $\lambda$ is a detuning parameter. By
setting $$\zeta = r\cos(\theta),\qquad  \lambda =
r\sin(\theta),\qquad r \geq 0,\qquad  \theta = \pi /2,$$ this system
can be transformed into the scheme \eqref{IVPPP} with
\begin{equation*}
\begin{aligned}
&Q=\left(
    \begin{array}{cc}
      0 & -1 \\
      1 & 0 \\
    \end{array}
  \right),\ \ M=\frac{1}{\epsilon}\left(
                  \begin{array}{cc}
                    r & 0 \\
                    0 & r \\
                  \end{array}
                \right),\ \ V=-\frac{1}{2} \big(x_1x_2^2-\frac{1}{3}x_1^3\big).
\end{aligned}\end{equation*}
The energy of this system  is given by
$$H=\frac{1}{2}\frac{r}{\epsilon}(x_1^2+x_2^2)-\frac{1}{2}\big(x_1x_2^2-\frac{1}{3}x_1^3\big).$$
We choose $r=1,\epsilon=10^{-4}$ and use the  initial values
$x_1(0)=1.1  \sqrt{\epsilon},\ x_2(0)=\sqrt{\epsilon}.$  This
problem is solved in a long interval  $[0,10^6]$ with $h=0.5$.  The
conservations of the energy and the kinetic energy  for different
integrators are presented in Figures \ref{p2}-\ref{p7}.

From these results, it can be observed that EI-T and symplectic EI-O
methods conserve
 the energy and the kinetic energy quite well over a long term. The integrator  EI-O2  does
not conserve the energy and the magnetic moment as well as the
others. For the five EI-O integrators, it seems that the
symplecticness condition plays an important role for the long-time
conservations. We will explain  the good numerical behaviours of
EI-T and EI-O  satisfying symplecticness condition theoretically by
the modulated Fourier expansion of the integrators in the rest of
this paper. For the method EI-O4 which does not satisfy
symplecticness condition, it has a  much better numerical behaviour
than we expect. The theoretical reason for this will be further
studied in future.

\begin{figure}[ptb]
\centering
\includegraphics[width=15cm,height=5cm]{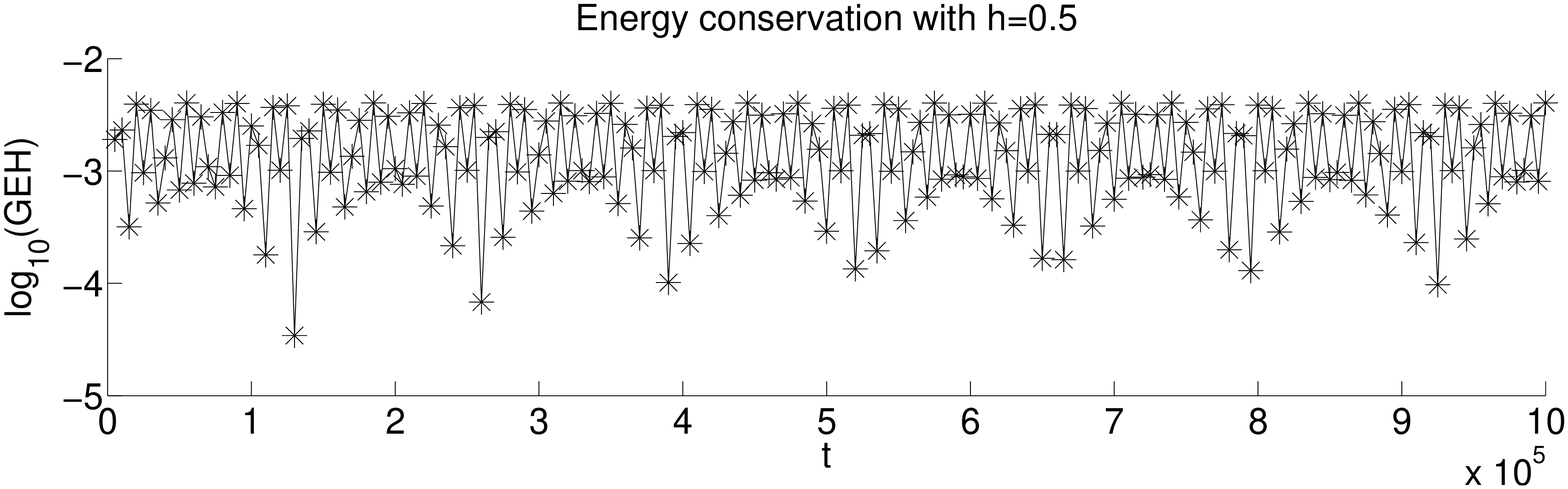}\
\includegraphics[width=15cm,height=5cm]{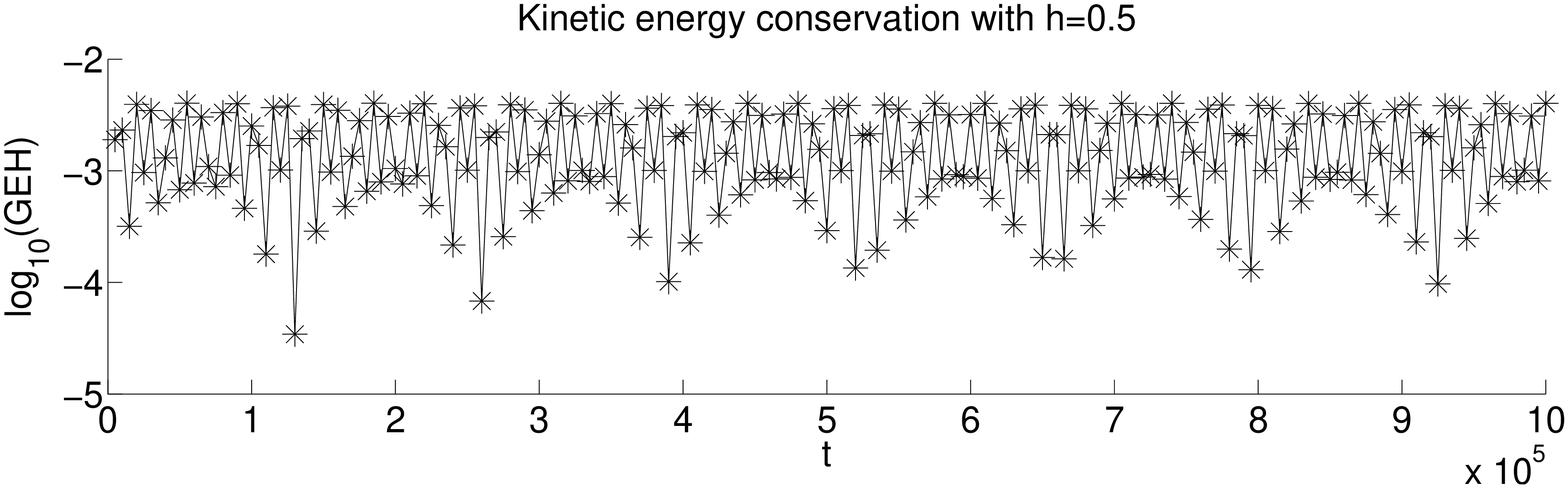}
\caption{EI-T: the logarithm of the  errors against $t$.} \label{p2}
\end{figure}

 \begin{figure}[ptb]
\centering
\includegraphics[width=12cm,height=3cm]{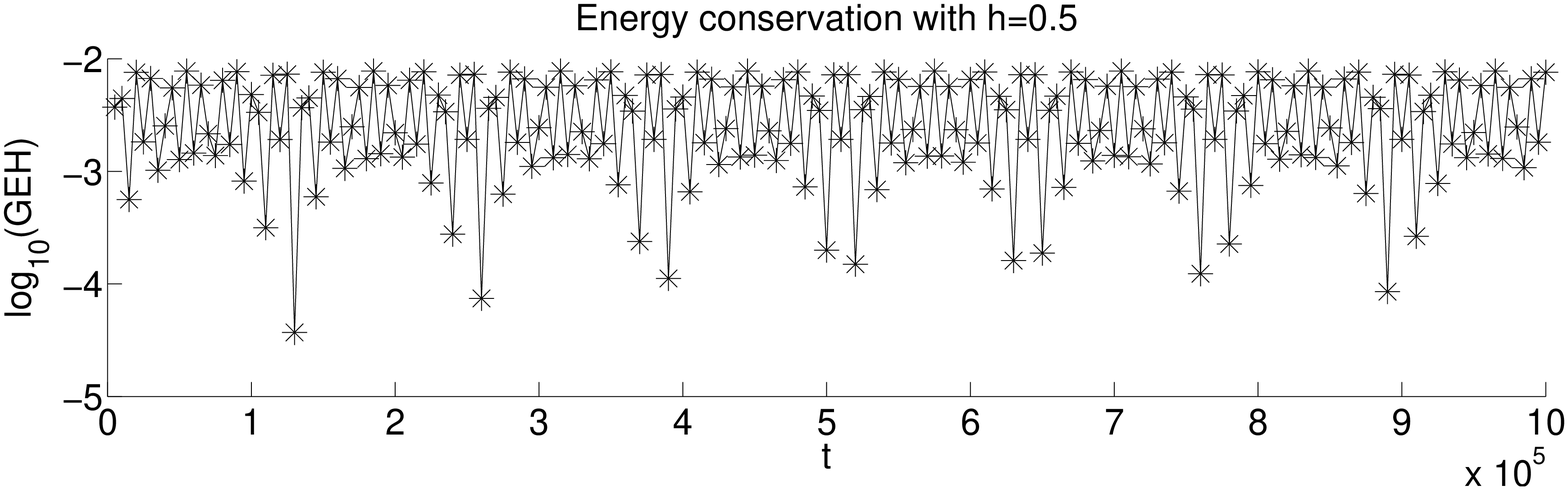}
\includegraphics[width=12cm,height=3cm]{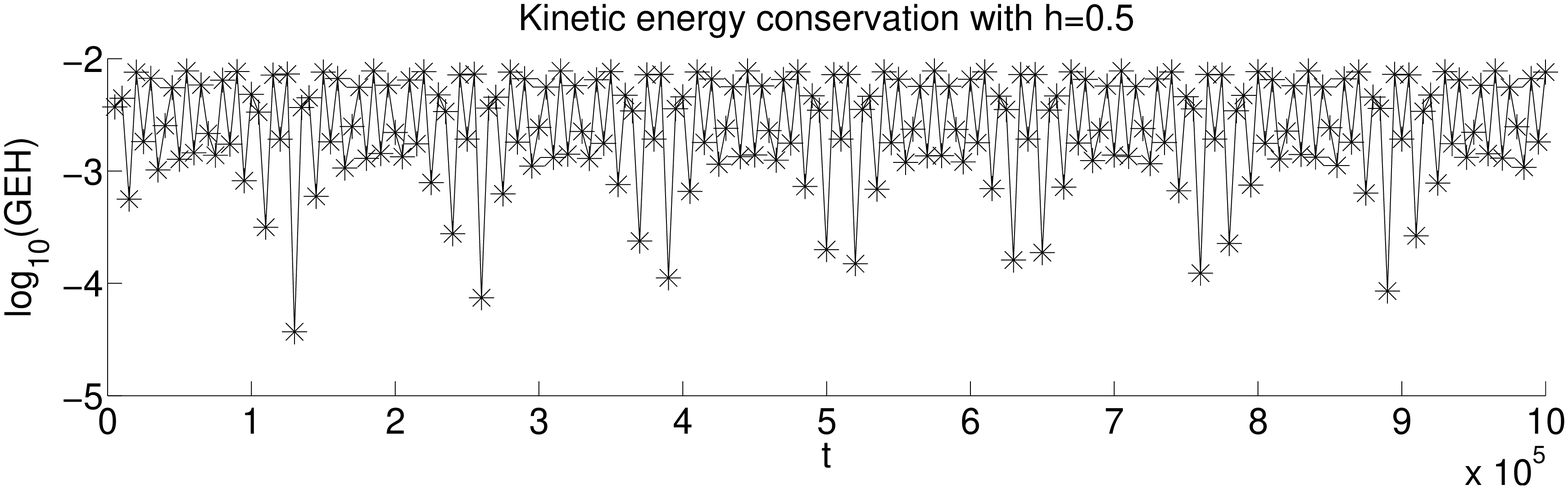}
\caption{EI-O1: the logarithm of the  errors  against  $t$.}
\label{p3}
\end{figure}

 \begin{figure}[ptb]
\centering
\includegraphics[width=12cm,height=3cm]{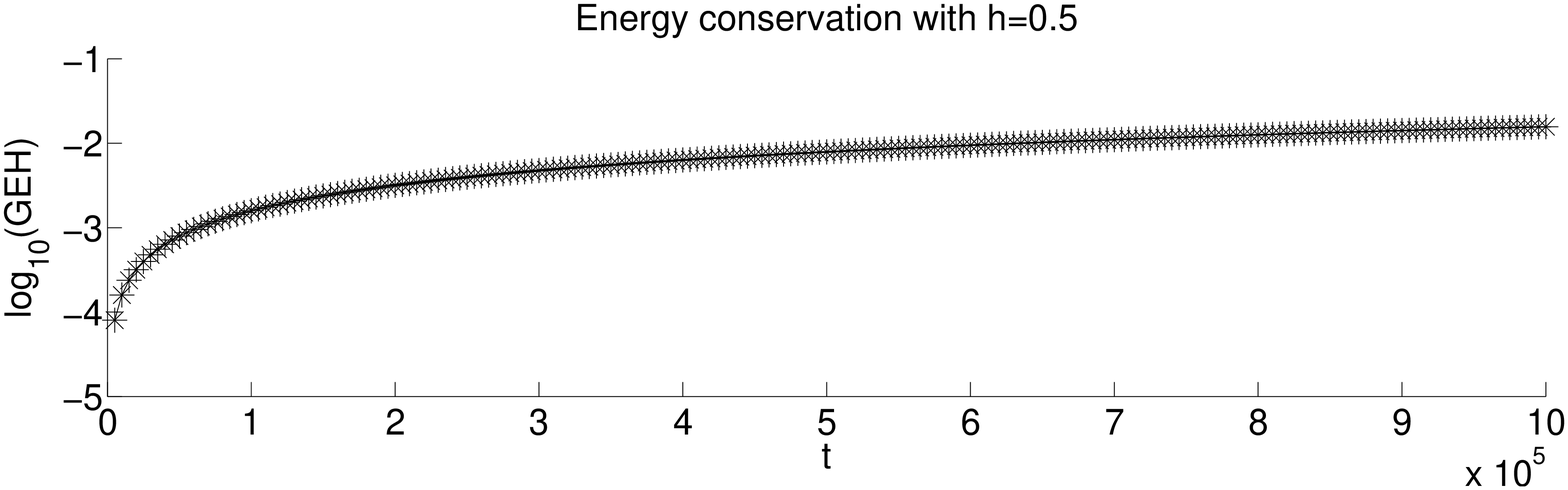}
\includegraphics[width=12cm,height=3cm]{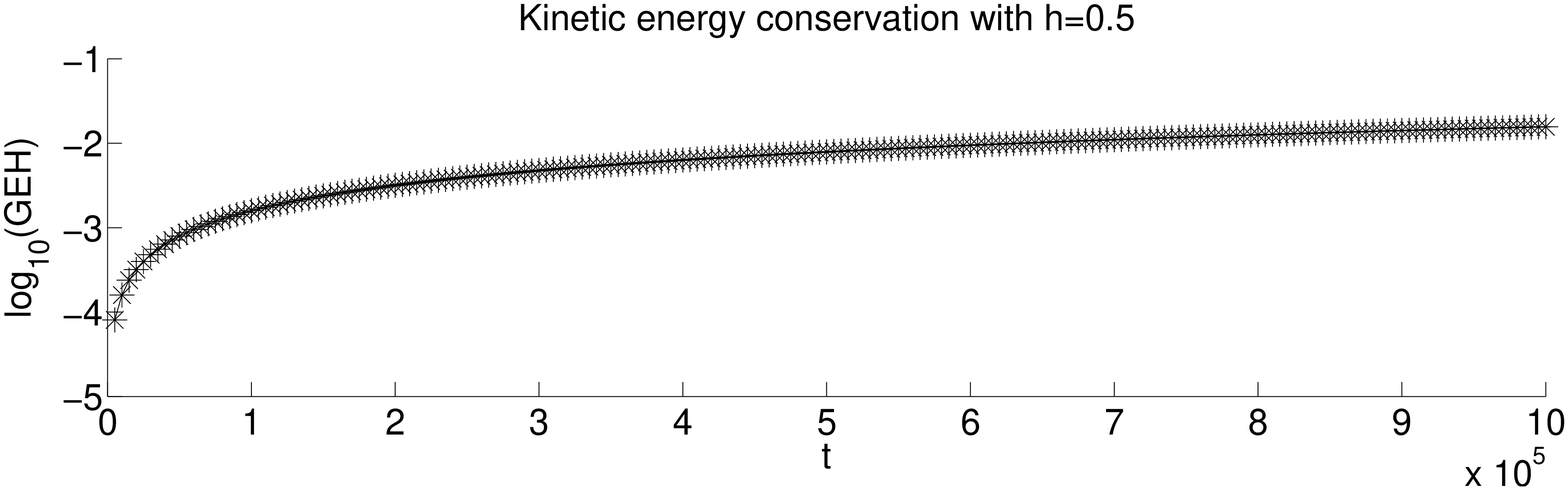}
\caption{EI-O2: the logarithm of the  errors  against  $t$.}
\label{p4}
\end{figure}

 \begin{figure}[ptb]
\centering
\includegraphics[width=12cm,height=3cm]{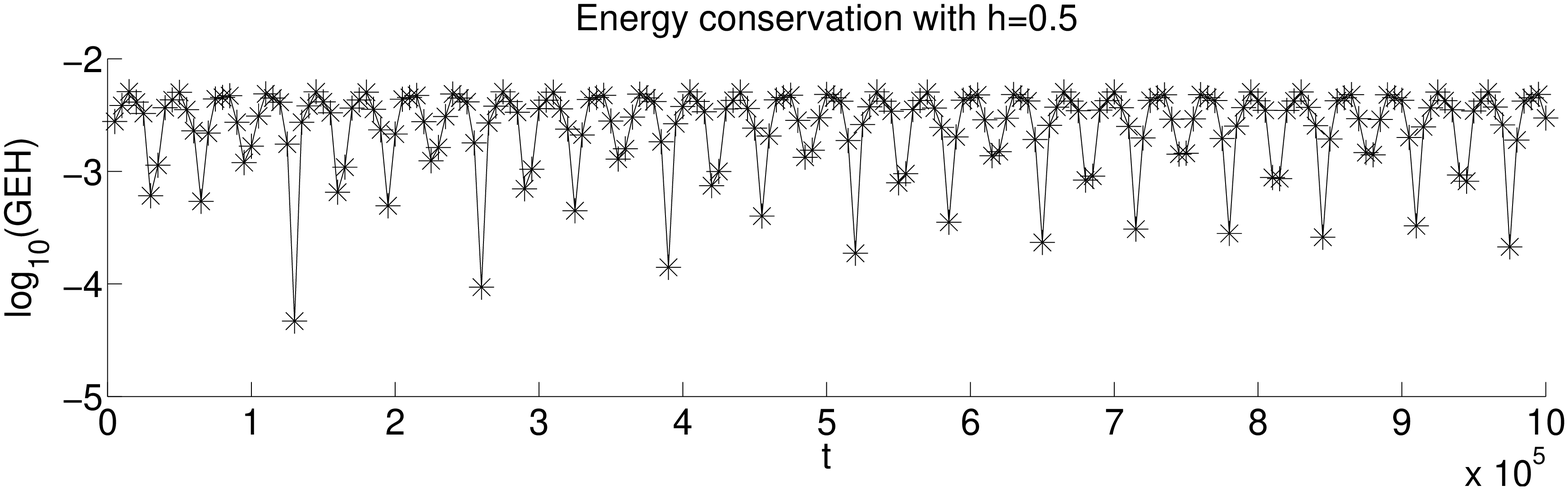}
\includegraphics[width=12cm,height=3cm]{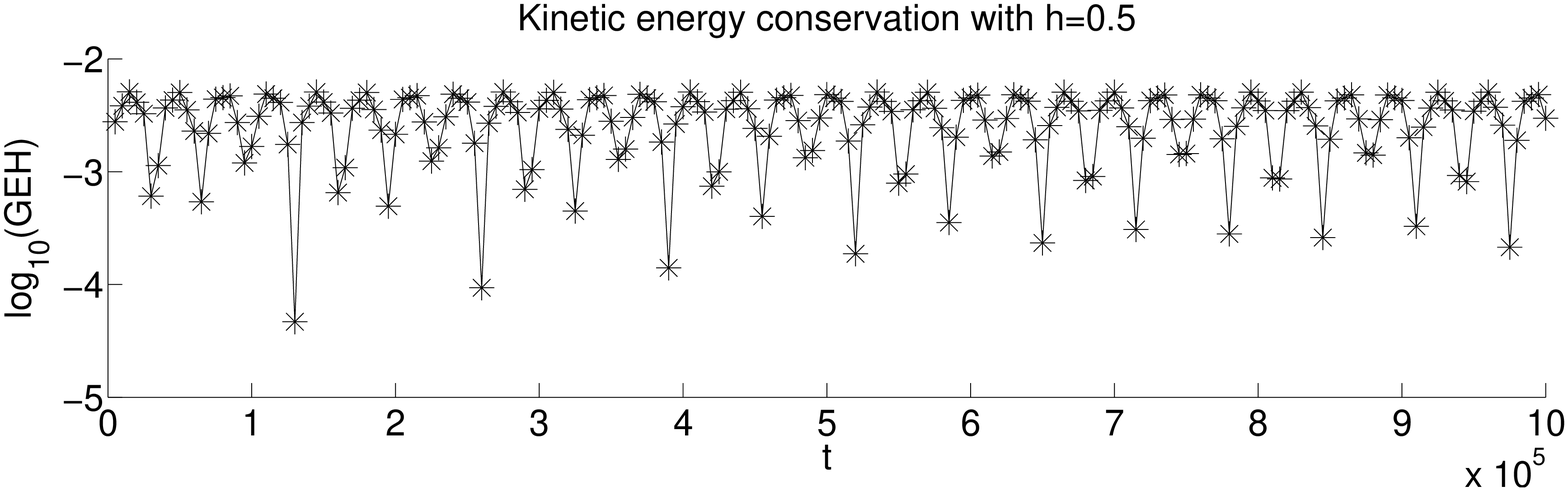}
\caption{EI-O3: the logarithm of the  errors  against  $t$.}
\label{p5}
\end{figure}

 \begin{figure}[ptb]
\centering
\includegraphics[width=12cm,height=3cm]{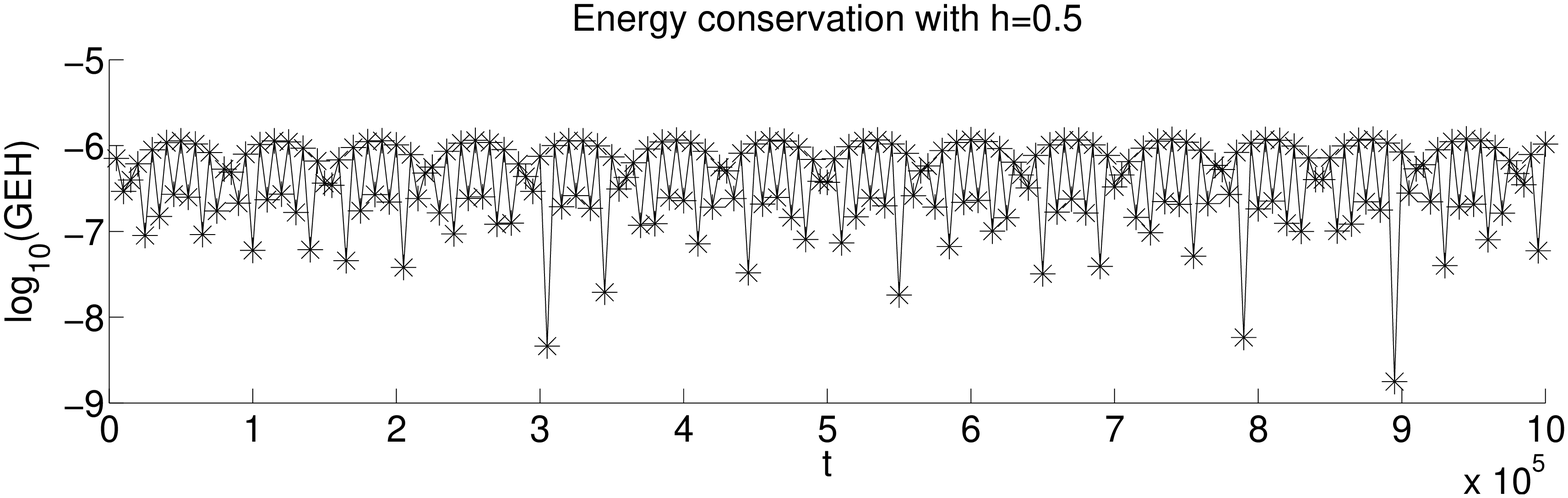}
\includegraphics[width=12cm,height=3cm]{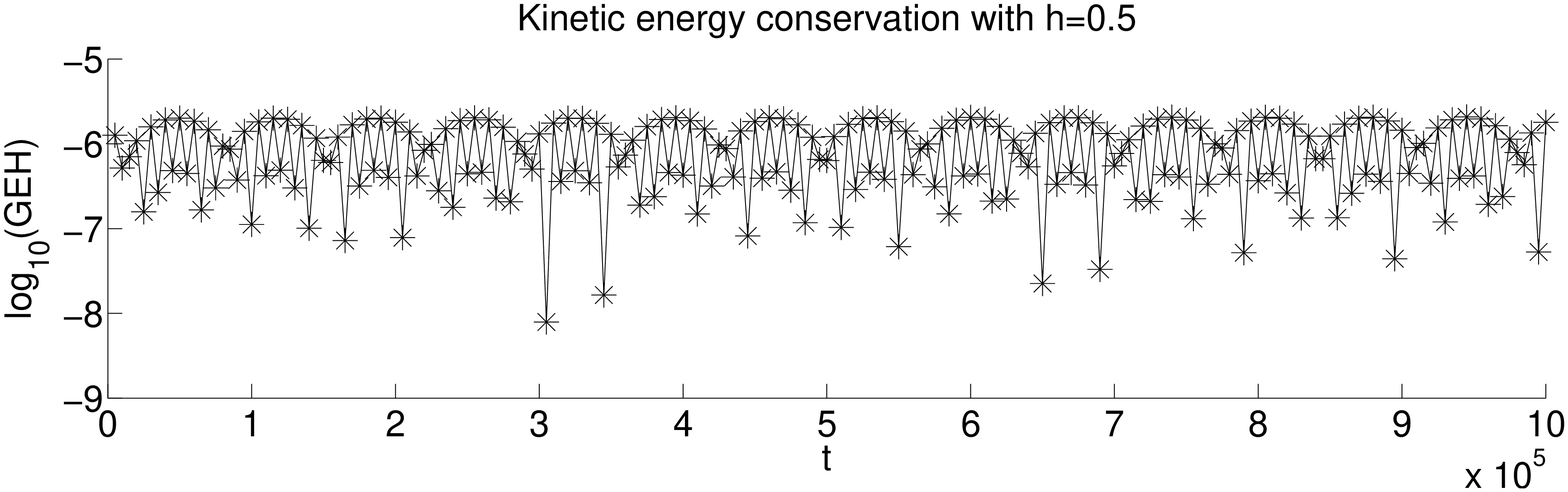}
\caption{EI-O4: the logarithm of the  errors  against  $t$.}
\label{p6}
\end{figure}

 \begin{figure}[ptb]
\centering
\includegraphics[width=12cm,height=3cm]{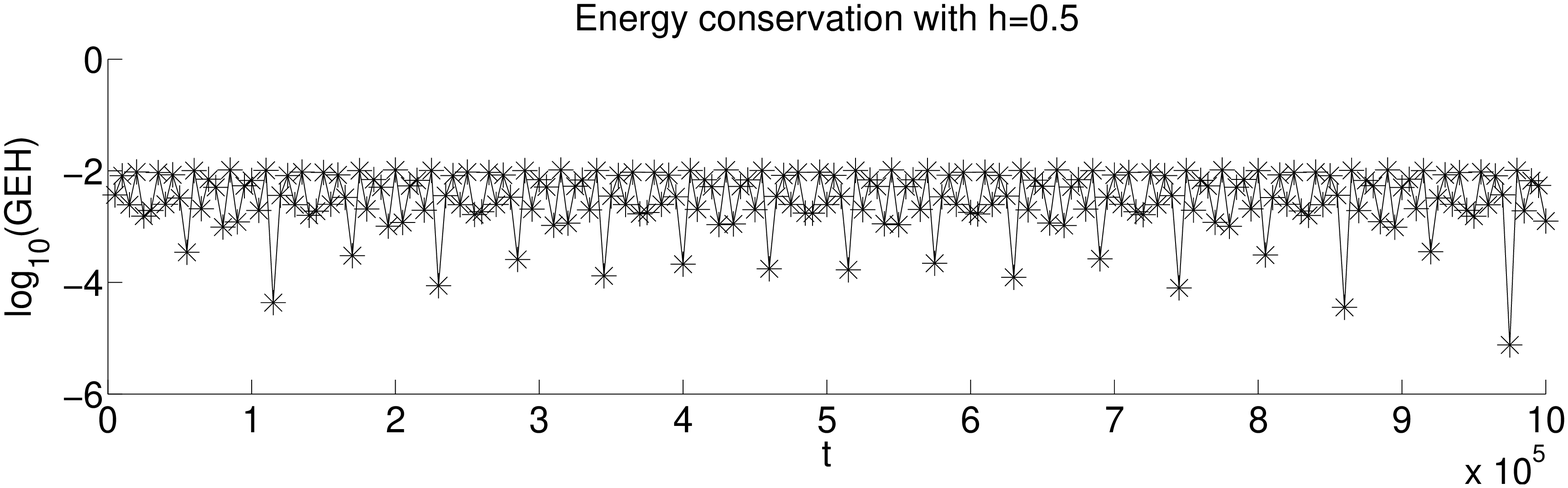}
\includegraphics[width=12cm,height=3cm]{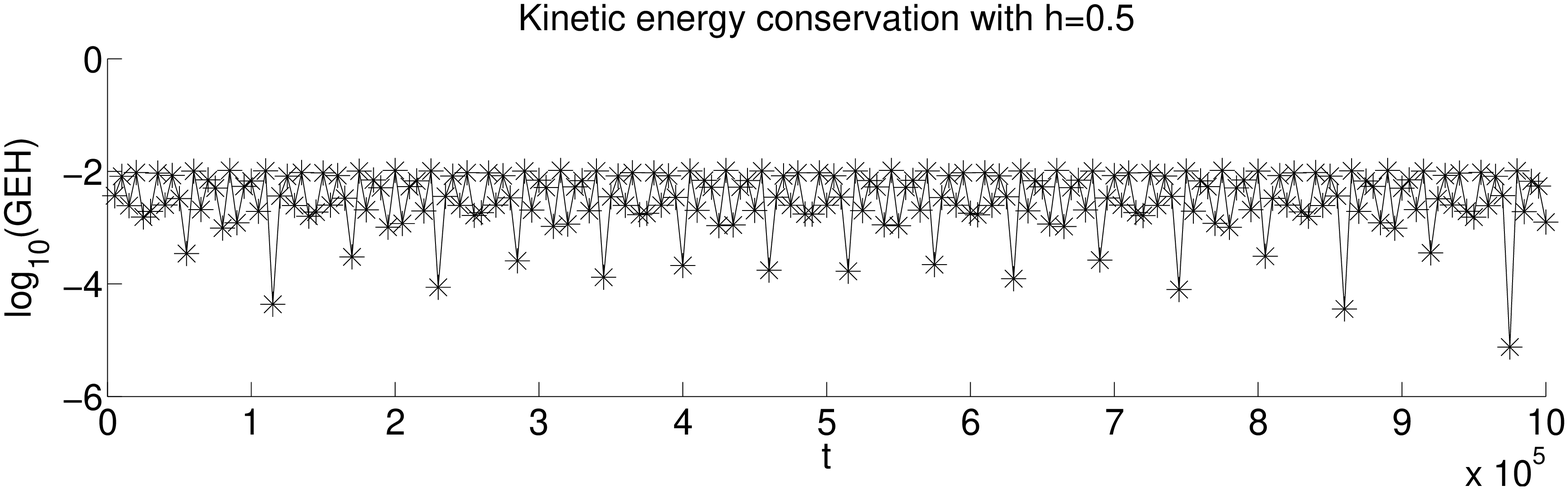}
\caption{EI-O5: the logarithm of the  errors  against  $t$.}
\label{p7}
\end{figure}

\section{Long-time  conservation of the method EI-T}\label{sec: Long-time of
scheme 1} In this section, we show the long-time conservations of
the method EI-T by  modulated Fourier expansion. Our analysis is
limited to the case that $\Omega$ is skew-Hermitian with eigenvalues
of large modulus. The analysis for the  case that $\Omega$ is
symmetric negative definite can be obtained in a same way.

\subsection{Preliminaries}

To do this, we first transform the system  \eqref{IVP1} as follows.
For the skew-Hermitian $\Omega$, there exists a unitary matrix $P$
and a diagonal matrix $\Lambda$ such that $ \Omega=P  \textmd{i}
\Lambda P^\textup{H},$ where
\begin{equation}\label{Lambda}\Lambda=\frac{1}{\epsilon}\textmd{diag}(-\lambda_l
I_{d_l},\ldots,-\lambda_1I_{d_1}, -\lambda_0I_{d_0},\lambda_0
I_{d_0},\lambda_1I_{d_1},\ldots, \lambda_lI_{d_l})\end{equation}
with $\lambda_0=0$ and $\lambda_k>0$. Since
$\Omega=\frac{1}{\epsilon}QM$ with a skew symmetric matrix $Q$ and a
symmetric real matrix $M$, the trance of $\Omega$ is zero. This is
the reason that why $\Lambda$ is assumed to be the form
\eqref{Lambda}.
 With the linear change of variable
  \begin{equation}\label{change of variable}\tilde{y}(t)= P^\textup{H} y(t),\end{equation}the system
\eqref{IVP1} can be rewritten as
\begin{equation}\label{necharged-sts-first order}
\tilde{y}^{\prime}(t)= \textmd{i} \tilde{\Omega}
\tilde{y}(t)+\tilde{g}(\tilde{y}(t)),\quad
\tilde{y}(0)=P^\textup{H}y_{0},
\end{equation}
where $$\tilde{\Omega}=
\textmd{diag}(-\tilde{\omega}_lI_{d_l},\ldots,-\tilde{\omega}_1
I_{d_1},-\tilde{\omega}_0 I_{d_0},\tilde{\omega}_0
I_{d_0},\tilde{\omega}_1I_{d_1},\ldots, \tilde{\omega}_lI_{d_l})$$
with $\tilde{\omega}_k=\frac{ \lambda_k}{\epsilon}$ and
$$\tilde{g}(\tilde{y})=P^\textup{H} g(P \tilde{y})=
-\nabla_{\tilde{y}} V(P\tilde{y}).$$ The energy of this transformed
system is given by
\begin{equation}\label{new energy of cha}
H(y)= \dfrac{1}{2}y^{\intercal}\big(\frac{1}{\epsilon}M\big)y+V(y) =
\dfrac{1}{2}\tilde{y}^{\intercal}\Lambda \tilde{y}+V(P\tilde{y})
:=\tilde{H}(\tilde{y})
\end{equation}
and the kinetic energy    becomes
\begin{equation}\label{newmomentum for B}
K(y)=\dfrac{1}{2}\tilde{y}^{\intercal}\Lambda
\tilde{y}:=\tilde{K}(\tilde{y}).
\end{equation}
For solving  this system,  the EI-T scheme \eqref{EI-T} has the
following form
\begin{equation}\label{ne exp integ one-stage}
\tilde{y}^{n+1}=e^{ \mathrm{i}h
\tilde{\Omega}}\tilde{y}^n+\frac{h}{2}
\big(\tilde{g}(\tilde{y}^{n+1})+e^{\mathrm{i} h
\tilde{\Omega}}\tilde{g}(\tilde{y}^{n})\big).
\end{equation}

 Let
\begin{equation*}
\begin{aligned}&\lambda=(\lambda_1,\ldots,\lambda_l),\quad
k=(k_1,\ldots,k_l),\quad k\cdot
\lambda=k_1\lambda_1+\cdots+k_l\lambda_l,
\end{aligned}
\end{equation*}
and the resonance module is  denoted by
\begin{equation}\mathcal{M}= \{k\in \mathbb{Z}^{l}:\ k\cdot
\lambda=0\}.
\label{M}%
\end{equation} Moreover,   the
following notations will be used in this paper:
\begin{equation*}
\begin{aligned}
&\omega=(\omega_1,\ldots,\omega_l),\quad \langle
j\rangle=(0,\ldots,1,\ldots,0),\quad |k|=|k_1|+\cdots+|k_l|.
\end{aligned}
\end{equation*}
Denote by $\mathcal{K}$   a set of representatives of the
equivalence classes in $\mathbb{Z}^l\backslash \mathcal{M}$ which
are chosen such that for each $k\in\mathcal{K}$ the sum $|k|$ is
minimal in the equivalence class $[k] = k +\mathcal{M},$ and that
with $k\in\mathcal{K}$, also $-k\in\mathcal{K}.$ For the positive
integer $N$, it is denoted that
\begin{equation}\mathcal{N}=\{k\in\mathcal{K}:\ |k|\leq N\},\ \ \ \ \ \mathcal{N}^*=\mathcal{N}\backslash
\{(0,\ldots,0)\}.
\label{mathcalN}%
\end{equation}

In this paper,   the vector $y$ is denoted by
 $$y=(y_{-l},\ldots,y_{-1},y_{-0},y_{0},y_1,\ldots,y_l)$$ with
$q_{\pm j} \in \mathbb{R}^{d_j}$. The same   notation
 is used   for all the
vectors with the same dimension as $y$. Following \cite{hairer2006},
we   define the operator
\begin{equation}\label{L1L2}
 \begin{array}[c]{ll}
L(hD)=(e^{hD}-e^{h
\mathrm{i}\tilde{\Omega}})(e^{hD}+e^{\mathrm{i}h\tilde{\Omega}})^{-1}\end{array}\end{equation}
with the differential operator $D$. We consider the application of
such an operator to functions of the form $\mathrm{e}^{\mathrm{i}(k
\cdot \omega) t}.$
%By Leibniz' rule of calculus, one has
%$$(hD)^m \mathrm{e}^{\mathrm{i}(k \cdot \omega) t} z(t)=
%\mathrm{e}^{\mathrm{i}(k \cdot \omega) t}(hD+\mathrm{i}(k \cdot
%\omega) h)^m z (t),$$  which yields $ f(hD)
%\mathrm{e}^{\mathrm{i}(k \cdot \omega) t} z(t)=
%\mathrm{e}^{\mathrm{i}(k \cdot \omega) t}f(hD+\mathrm{i}(k \cdot
%\omega) h)  z (t),
%%\label{fhD}%
%$ where $$f(hD+\mathrm{i}(k \cdot \omega) h)z
%(t)=\sum\limits_{m=0}^{\infty}\frac{f^{(m)}( \mathrm{i}(k \cdot
%\omega) h)}{m!}h^m z^{(m)}(t).$$
 Furthermore, this operator has  the following
proposition which can be verified easily.
\begin{prop}\label{lhd pro}
 The Taylor expansions of $  L(hD)$   are given by
\begin{equation*}
\begin{aligned}L(hD)= &-\tan(\frac{1}{2}h\tilde{\Omega})\mathrm{i} -(I+\cos(h\tilde{\Omega}))^{-1}
\mathrm{i}(\mathrm{i}hD)-2 \csc^3(h\tilde{\Omega})\sin^4(\frac{1}{2}h\tilde{\Omega})\mathrm{i}(\mathrm{i}hD)^2-\cdots,\\
L(hD+\mathrm{i}h(k\cdot\tilde{\omega}))=&\tan\big(\frac{1}{2}h((k\cdot\tilde{\omega})I-\tilde{\Omega})\big)\mathrm{i}
-\Big(I+\cos\big(h((k\cdot\tilde{\omega})I-\tilde{\Omega})\big)\Big)^{-1}
\mathrm{i}(\mathrm{i}hD)\\
&+2 \csc^3\big(h((k\cdot\tilde{\omega})I-\tilde{\Omega})\big)\sin^4\big(\frac{1}{2}h((k\cdot\tilde{\omega})I+\tilde{\Omega})\big)\mathrm{i}(\mathrm{i}hD)^2+\cdots.\\
\end{aligned}
\end{equation*}
\end{prop}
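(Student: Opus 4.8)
The plan is to recognise that the rational expression defining $L$ is a hyperbolic tangent in disguise, after which the whole statement reduces to a one-variable Taylor expansion. Since $\tilde{\Omega}$ is a constant diagonal matrix it commutes with the shift operator $e^{hD}$, so the scalar identity $\frac{e^{z}-e^{w}}{e^{z}+e^{w}}=\tanh\big(\tfrac{1}{2}(z-w)\big)$ applies verbatim with $z=hD$ and $w=\mathrm{i}h\tilde{\Omega}$. This yields the closed forms
\[
L(hD)=\tanh\Big(\tfrac{1}{2}\big(hD-\mathrm{i}h\tilde{\Omega}\big)\Big),\qquad L\big(hD+\mathrm{i}h(k\cdot\tilde{\omega})\big)=\tanh\Big(\tfrac{1}{2}\big(hD+\mathrm{i}h((k\cdot\tilde{\omega})I-\tilde{\Omega})\big)\Big),
\]
the second following from the first by the replacement $D\mapsto D+\mathrm{i}(k\cdot\tilde{\omega})$. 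Thus both series are Taylor expansions of the single analytic function $\tanh$ about the base point $w_{0}$ obtained by setting $hD=0$, namely $w_{0}=-\tfrac12\mathrm{i}h\tilde{\Omega}$ in the first case and $w_{0}=+\tfrac12\mathrm{i}h\big((k\cdot\tilde{\omega})I-\tilde{\Omega}\big)$ in the second.

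First I would expand $\tanh(w_{0}+\delta)$ in powers of $\delta=\tfrac12 hD$. Using $\tanh'=\mathrm{sech}^{2}$ and $\tanh''=-2\,\mathrm{sech}^{2}\tanh$, the first three Taylor coefficients are $\tanh(w_{0})$, $\tfrac12\mathrm{sech}^{2}(w_{0})$ and $-\tfrac14\mathrm{sech}^{2}(w_{0})\tanh(w_{0})$. It then remains to cast these as real trigonometric quantities through $\tanh(\mathrm{i}x)=\mathrm{i}\tan x$ and $\mathrm{sech}(\mathrm{i}x)=\sec x$, and to re-expand each power of $hD$ as a power of $(\mathrm{i}hD)$ via $hD=-\mathrm{i}(\mathrm{i}hD)$. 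Concretely, the constant term gives $\tanh(-\tfrac12\mathrm{i}h\tilde{\Omega})=-\mathrm{i}\tan(\tfrac12 h\tilde{\Omega})$, the claimed leading coefficient. For the linear term the half-angle identity $\sec^{2}(\tfrac12 h\tilde{\Omega})=2\,(1+\cos(h\tilde{\Omega}))^{-1}$ turns $\tfrac12\mathrm{sech}^{2}(w_{0})$ into $(I+\cos(h\tilde{\Omega}))^{-1}$, and the conversion $hD=-\mathrm{i}(\mathrm{i}hD)$ supplies the factor matching $-(I+\cos(h\tilde{\Omega}))^{-1}\mathrm{i}(\mathrm{i}hD)$. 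For the quadratic term the identity $\frac{\sin(\tfrac12\theta)}{4\cos^{3}(\tfrac12\theta)}=2\csc^{3}(\theta)\sin^{4}(\tfrac12\theta)$, obtained from $\sin\theta=2\sin\tfrac\theta2\cos\tfrac\theta2$, recasts the coefficient into $-2\csc^{3}(h\tilde{\Omega})\sin^{4}(\tfrac12 h\tilde{\Omega})\mathrm{i}$.

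The shifted expansion is produced by the identical computation, with $\tilde{\Omega}$ effectively replaced by $(k\cdot\tilde{\omega})I-\tilde{\Omega}$ in every coefficient. A clean way to organise the signs is to note that the shifted base point is exactly the negative of the unshifted one after this replacement; hence coefficients built from an odd power of $\tanh(w_{0})$ (the constant and quadratic terms) flip sign, while the linear term, which involves only the even function $\mathrm{sech}^{2}(w_{0})$, does not. I expect no conceptual obstacle once the $\tanh$ form is isolated: the only delicate point is the bookkeeping, i.e. correctly tracking the factors of $\mathrm{i}$ arising both from $\tanh(\mathrm{i}x)=\mathrm{i}\tan x$ and from the passage $(hD)^{n}\mapsto(\mathrm{i}hD)^{n}$, and selecting the half-angle identities that put each coefficient into the precise $\tan/\sec/\csc/\sin$ shape quoted. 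Since $\tanh$ is analytic in a neighbourhood of $w_{0}$ (away from its poles, which is guaranteed by the admissible step-size/frequency conditions), the series is a genuine Taylor expansion and the displayed ``$\cdots$'' is justified by continuing the same derivative formulas to higher order.
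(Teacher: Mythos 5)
Your reduction of $L(hD)$ to $\tanh\bigl(\tfrac12(hD-\mathrm{i}h\tilde{\Omega})\bigr)$ is exactly the right move, and it is essentially the only sensible way to verify this statement: the paper itself offers no proof at all (it merely asserts the proposition ``can be verified easily''), so your Taylor expansion about the base point, followed by the conversions $\tanh(\mathrm{i}x)=\mathrm{i}\tan x$, $\mathrm{sech}(\mathrm{i}x)=\sec x$ and the half-angle identities, legitimately fills that gap. Your treatment of the first expansion is correct in every term: the constant, linear and quadratic coefficients all come out as printed, and the identity $2\csc^{3}(\theta)\sin^{4}(\theta/2)=\sin(\theta/2)/(4\cos^{3}(\theta/2))$ you invoke is valid.

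There is, however, one point you must not gloss over. Your own replacement rule --- ``$\tilde{\Omega}$ replaced by $(k\cdot\tilde{\omega})I-\tilde{\Omega}$ in every coefficient, with a sign flip on the terms odd in $\tanh(w_0)$'' --- is correct, but applied faithfully it produces the quadratic coefficient
\[
+2\csc^{3}\bigl(h((k\cdot\tilde{\omega})I-\tilde{\Omega})\bigr)\,\sin^{4}\bigl(\tfrac12 h((k\cdot\tilde{\omega})I-\tilde{\Omega})\bigr)\,\mathrm{i}\,(\mathrm{i}hD)^{2},
\]
whereas the proposition as printed has $\sin^{4}\bigl(\tfrac12 h((k\cdot\tilde{\omega})I+\tilde{\Omega})\bigr)$, with a plus sign inside the $\sin^{4}$. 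Since $(k\cdot\tilde{\omega})I+\tilde{\Omega}$ is not the negative of $(k\cdot\tilde{\omega})I-\tilde{\Omega}$ unless $k\cdot\tilde{\omega}=0$, the evenness of $\sin^{4}$ reconciles the two versions only in that degenerate case; for $k\cdot\tilde{\omega}\neq0$ they genuinely differ (a scalar check with $h=1$, $\tilde{\Omega}=1$, $k\cdot\tilde{\omega}=2$ gives $0.177\,\mathrm{i}$ versus $3.32\,\mathrm{i}$ for the coefficient of $(\mathrm{i}hD)^2$). So your derivation --- which is the mathematically correct one --- establishes a corrected version of the statement, and the printed argument of $\sin^{4}$ is evidently a typo. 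As written, though, your proposal claims that the ``identical computation'' reproduces the formulas as quoted, which is false for this term; you should state explicitly what your expansion yields and flag the discrepancy with the printed proposition, rather than assert an agreement that does not hold.
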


\subsection{Modulated Fourier expansion}
In this subsection, we derive the modulated Fourier expansion of
EI-T integrator. Before doing that, we need the following
assumptions which have also been used in \cite{Hairer00,Cohen12}.

\begin{assum}\label{ass}
\begin{itemize}

\item  It is assumed that the initial value  $y^0$   satisfies
\begin{equation} \label{initial value energy condition}
 \frac{1}{2\epsilon}\norm{y^{0\intercal} M y^0}^2+V(y^0)\leq E, %\label{initial value energy condition}%
\end{equation}
where $E$ is a constant independent of $\epsilon$.
\item  The numerical solution is supposed to
stay in a compact set on which the potential $V$ is
 smooth.

\item It is required a lower bound    for  the step size
$
h/\epsilon \geq c_0 > 0. %\label{h condition}%
$

\item The numerical non-resonance condition  is considered
\begin{equation}
|\sin(\frac{h}{2 \epsilon}(k\cdot \lambda))| \geq c \sqrt{h}\ \
\mathrm{for} \ \ k \in \mathbb{Z}^l\backslash \mathcal{M}
   \ \   \mathrm{with} \ \  |k|\leq N\label{numerical non-resonance cond}%
\end{equation}
for some $N\geq2$ and $c>0$.

 \end{itemize}
\end{assum}

\begin{theo}\label{energy thm}
Under the above assumptions   and  for $0 \leq t=nh \leq T$, the
EI-T method \eqref{ne exp integ one-stage} can be expressed by the
following modulated Fourier expansion
\begin{equation}
\tilde{y}^{n}= \tilde{\zeta}(t)+\sum\limits_{k\in\mathcal{N}^*}
\mathrm{e}^{\mathrm{i}(k \cdot \tilde{\omega})
t}\tilde{\zeta}^k(t)+\tilde{R}_{h,N}(t),
\label{MFE-ERKN-0}%
\end{equation}
 where the remainder term  is bounded by
\begin{equation}
 \tilde{R}_{h,N}(t)=\mathcal{O}(th^{N-1}),
\label{remainder}%
\end{equation}
and the coefficient functions as well as all their derivatives are
bounded by
\begin{equation}%
\begin{array}{ll}
 \tilde{\zeta}_{0}(t)=\mathcal{O}(1),\
& \tilde{\zeta}_{\pm j}(t)= \mathcal{O}(\sqrt{h}),\\
 \tilde{\zeta}_{-j}^{-\langle j\rangle}(t)=\mathcal{O}(\sqrt{h}),
\
& \tilde{\zeta}_{j}^{\langle j\rangle}(t)=\mathcal{O}(\sqrt{h})+\cdots,\\
 \tilde{\zeta}_{-
j}^{k}(t)=\mathcal{O}\Big(h^{\frac{|k|+1}{2}}\Big),
 \ k\neq-\langle
j\rangle,\
&\tilde{\zeta}_{j}^{k}(t)=\mathcal{O}\Big(h^{\frac{|k|+1}{2}}\Big),
\ k\neq \langle j\rangle,
\end{array} %
\label{coefficient func}%
\end{equation}
for $j=1,\ldots,l$. It is noted that
$\tilde{\zeta}_{-j}^{-k}=\overline{\tilde{\zeta}_{j}^{k}}$. The
constants symbolised by the notation depend on $E,\ N,\ c_0$ and
$T$, but are independent of $h$ and $\tilde{\omega}$.
\end{theo}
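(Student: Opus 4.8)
The plan is to adapt the modulated Fourier expansion technique of \cite{Hairer00,hairer2006,Cohen12} to the implicit exponential map \eqref{ne exp integ one-stage}. First I would recast the one-step recursion as a single operator equation. Regarding the numerical values $\tilde{y}^{n}$ as the samples at $t=nh$ of a smooth function $\tilde{y}(t)$, so that the step $\tilde{y}^{n+1}$ corresponds to $e^{hD}\tilde{y}(t)$ and $\tilde{g}(\tilde{y}^{n+1})$ to $e^{hD}\tilde{g}(\tilde{y}(t))$, the scheme \eqref{ne exp integ one-stage} becomes
\begin{equation*}
(e^{hD}-e^{\mathrm{i}h\tilde{\Omega}})\tilde{y}=\tfrac{h}{2}(e^{hD}+e^{\mathrm{i}h\tilde{\Omega}})\tilde{g}(\tilde{y}),
\end{equation*}
and, since $e^{hD}$ and $e^{\mathrm{i}h\tilde{\Omega}}$ commute, multiplying by $(e^{hD}+e^{\mathrm{i}h\tilde{\Omega}})^{-1}$ gives the compact form $L(hD)\tilde{y}=\tfrac{h}{2}\tilde{g}(\tilde{y})$ with $L$ as in \eqref{L1L2}. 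I would then insert the ansatz $\tilde{y}(t)=\sum_{k}e^{\mathrm{i}(k\cdot\tilde{\omega})t}\tilde{\zeta}^{k}(t)$, use the conjugation identity $L(hD)\big(e^{\mathrm{i}(k\cdot\tilde{\omega})t}\tilde{\zeta}^{k}\big)=e^{\mathrm{i}(k\cdot\tilde{\omega})t}L\big(hD+\mathrm{i}h(k\cdot\tilde{\omega})\big)\tilde{\zeta}^{k}$, expand the right-hand nonlinearity $\tilde{g}=-\nabla_{\tilde{y}}V(P\,\cdot\,)$ in a Taylor series around its dominant part, and sort every contribution by its frequency $e^{\mathrm{i}(k\cdot\tilde{\omega})t}$. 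Matching coefficients frequency by frequency produces the modulation system $L\big(hD+\mathrm{i}h(k\cdot\tilde{\omega})\big)\tilde{\zeta}^{k}=\tfrac{h}{2}\,[\tilde{g}(\tilde{y})]^{k}$.

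Next I would use the Taylor expansions of Proposition \ref{lhd pro} to read off the nature of each modulation equation. For a component in the $\pm j$ block the leading (algebraic) symbol of the operator is $\mathrm{i}\tan\!\big(\tfrac{h}{2}((k\cdot\tilde{\omega})\mp\tilde{\omega}_{j})\big)$. When $k=\pm\langle j\rangle$ this tangent vanishes at leading order (the resonant case), the leading algebraic term drops out, and the next term, proportional to $hD$, governs the relation; this yields the \emph{differential} equations for the dominant modulation functions $\tilde{\zeta}_{0}$ and $\tilde{\zeta}_{\pm j}^{\pm\langle j\rangle}$. For every non-resonant index the tangent is bounded away from zero exactly by the numerical non-resonance condition \eqref{numerical non-resonance cond}: because $\tfrac{h}{2}(k\cdot\tilde{\omega})=\tfrac{h}{2\epsilon}(k\cdot\lambda)$, the bound $|\sin(\tfrac{h}{2\epsilon}(k\cdot\lambda))|\ge c\sqrt{h}$ makes the leading symbol invertible with an inverse of size $\mathcal{O}(h^{-1/2})$, so these equations are solved \emph{algebraically}, expressing $\tilde{\zeta}^{k}$ through products of lower-order coefficient functions.

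I would then establish the sizes \eqref{coefficient func} by induction on $|k|$. The kinetic-energy bound \eqref{initial value energy condition} together with the step-size restriction $h/\epsilon\ge c_{0}$ forces the oscillatory amplitudes to start at $\mathcal{O}(\sqrt{\epsilon})=\mathcal{O}(\sqrt{h})$, so the dominant functions obey $\tilde{\zeta}_{\pm j}^{\pm\langle j\rangle}=\mathcal{O}(\sqrt{h})$ and $\tilde{\zeta}_{\pm j}=\mathcal{O}(\sqrt{h})$, while the smooth part stays $\tilde{\zeta}_{0}=\mathcal{O}(1)$. For a non-resonant index $k$ with $|k|=m$, the right-hand side $\tfrac{h}{2}[\tilde{g}(\tilde{y})]^{k}$ is, at lowest order, a product of $m$ dominant factors of size $\mathcal{O}(h^{m/2})$, and the algebraic inversion costs one factor $h^{-1/2}$; the net size $\tfrac{h}{2}\cdot h^{m/2}\cdot h^{-1/2}=\mathcal{O}(h^{(m+1)/2})$ is precisely the claimed bound $\tilde{\zeta}^{k}=\mathcal{O}(h^{(|k|+1)/2})$. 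The reality relation $\tilde{\zeta}_{-j}^{-k}=\overline{\tilde{\zeta}_{j}^{k}}$ is preserved because $\tilde{g}$ is real-valued, and differentiating the defining relations propagates the same bounds to all derivatives. The integration constants of the resonant differential equations are fixed from the numerical starting value $\tilde{y}^{0}=P^{\mathrm{H}}y_{0}$, matched order by order, where one must check that this initial system is solvable and that the resulting functions stay bounded on $[0,T]$, using the smoothness of $V$ on the compact set in Assumption \ref{ass}.

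Finally I would truncate the expansion after the frequencies $|k|\le N$ and after finitely many powers of $\sqrt{h}$, insert the truncation back into $L(hD)\tilde{y}=\tfrac{h}{2}\tilde{g}(\tilde{y})$, and estimate the resulting defect, which the construction arranges to be $\mathcal{O}(h^{N})$ per step. A stability estimate for \eqref{ne exp integ one-stage} — using that $e^{\mathrm{i}h\tilde{\Omega}}$ is unitary and that $\tilde{g}$ is Lipschitz on the compact set, so the implicit step is well defined and contractive for small $h$ — then propagates this defect over the $n=t/h$ steps through a discrete Gronwall (Lady Windermere's fan) argument, giving $\tilde{R}_{h,N}(t)=\mathcal{O}(nh^{N})=\mathcal{O}(th^{N-1})$ as in \eqref{remainder}. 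I expect the main obstacle to be the bookkeeping of the $\sqrt{h}$-powers in the inductive size estimates: one has to verify that the factor $h^{-1/2}$ lost in each non-resonant inversion is always outweighed by the amplitude supplied by the oscillatory factors of the nonlinearity, so that no coefficient function violates \eqref{coefficient func}, while simultaneously closing the resonant differential equations and fixing their initial data consistently with the non-resonant algebraic relations.
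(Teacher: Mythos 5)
Your proposal is correct and follows essentially the same route as the paper's own proof: recasting the scheme as the operator equation $L(hD)\tilde{y}=\tfrac{h}{2}\tilde{g}(\tilde{y})$, deriving the modulation system by comparing coefficients of $\mathrm{e}^{\mathrm{i}(k\cdot\tilde{\omega})t}$, using the Taylor expansions of Proposition \ref{lhd pro} together with the non-resonance condition \eqref{numerical non-resonance cond} to split resonant (differential) from non-resonant (algebraic) equations, fixing initial values at $t=0$, and bounding the defect by Lipschitz continuity and standard convergence estimates. In fact, your write-up supplies more of the $\sqrt{h}$-power bookkeeping and the Gronwall-type propagation of the defect than the paper's proof, which delegates these steps to the standard modulated-Fourier-expansion machinery of Hairer and Lubich.
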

\begin{proof}
In the proof of this theorem, we will construct the function
\begin{equation}
\begin{aligned} &\tilde{y}_{h}(t)= \tilde{\zeta}(t)+\sum\limits_{k\in\mathcal{N}^*}
\mathrm{e}^{\mathrm{i}(k \cdot \tilde{\omega}) t}\tilde{\zeta}^k(t)
\end{aligned}
\label{MFE-1}%
\end{equation} with smooth coefficient functions $\tilde{\zeta}$ and $\tilde{\zeta}^k$ and show   that there is
only a small defect when $\tilde{y}_{h}(t)$ is inserted into the
numerical scheme \eqref{ne exp integ one-stage}.

  $\bullet$ \textbf{Construction of the coefficients
functions.}

Inserting  \eqref{MFE-1}   into \eqref{ne exp integ one-stage} and
using the operator $\mathcal{L}(hD)$ and the Taylor series of the
nonlinearity, we have
\begin{equation*}
\begin{aligned}&\mathcal{L}(hD)\tilde{y}_{h}(t)=\frac{h}{2}
\tilde{g}(\tilde{y}_{h}(t))
=\frac{h}{2}\Big[\tilde{g}(\tilde{\zeta}(t))+\sum\limits_{k\in\mathcal{N}^*}\mathrm{e}^{\mathrm{i}(k
\cdot \tilde{\omega}) t}\sum\limits_{s(\alpha)\sim
k}\frac{1}{m!}\tilde{g}^{(m)}(\tilde{\zeta}(t))(\tilde{\zeta}(t))^{\alpha}
\Big],
\end{aligned} %
\end{equation*}
where the sums   are over all $m\geq1$ and over multi-indices
$\alpha=(\alpha_1,\ldots,\alpha_m)$ with $\alpha_j\in
\mathcal{N}^*$,  and the relation $s(\alpha)\sim k$  means
$s(\alpha)- k\in\mathcal{M}.$ We note that
 an abbreviation for the $m$-tuple
$(\tilde{\zeta}^{\alpha_1}(t),\ldots,\tilde{\zeta}^{\alpha_m}(t))$
is
   denoted by $(\tilde{\zeta}(t))^{\alpha}$.

Inserting  the ansatz \eqref{MFE-1} and comparing the coefficients
of $\mathrm{e}^{\mathrm{i}(k \cdot \tilde{\omega}) t}$  yields
\begin{equation}\label{zeta expre}
\begin{aligned}&\mathcal{L}(hD)\tilde{\zeta}(t)
=\frac{h}{2}\Big[\tilde{g}(\tilde{\zeta}(t))+
\sum\limits_{s(\alpha)\sim
0}\frac{1}{m!}\tilde{g}^{(m)}(\tilde{\zeta}(t))(\tilde{\zeta}(t))^{\alpha} \Big],\\
&\mathcal{L}(hD+\mathrm{i}(k \cdot \tilde{\omega})
h)\tilde{\zeta}^k(t)=\frac{h}{2}\sum\limits_{s(\alpha)\sim
k}\frac{1}{m!}\tilde{g}^{(m)}(\tilde{\zeta}(t))(\tilde{\zeta}(t))^{\alpha}.
\end{aligned} %
\end{equation}
This formula gives the  modulation system  for the coefficients
$\tilde{\zeta}^k(t)$ of the modulated Fourier expansion. According
to Proposition \ref{lhd pro}, the following ansatz of the modulated
Fourier functions $\tilde{\zeta}^k(t)$  can be obtained:
\begin{equation}\label{ansatz}%
\begin{array}{ll}
&\dot{\tilde{\zeta}}_{0}(t)=G_{00}(\cdot)+\cdots,\\
& \tilde{\zeta}_{\pm j}(t)= \frac{\frac{1}{2}h}{-\tan(\pm\frac{1}{2}h\tilde{\omega}_j)\mathrm{i}}\big(G_{\pm j0}(\cdot)+\cdots\big),\\
&\dot{\tilde{\zeta}}_{-j}^{-\langle
j\rangle}(t)=F^{1}_{-j0}(\cdot)+\cdots, \\
& \dot{\tilde{\zeta}}_{j}^{\langle j\rangle}(t)=F^{1}_{j0}(\cdot)+\cdots,\\
& \tilde{\zeta}_{-
j}^{k}(t)=\frac{\frac{1}{2}h}{\tan\big(\frac{1}{2}h(k\cdot\tilde{\omega}+\tilde{\omega}_j)\big)\mathrm{i}}\big(F^k_{-j0}(\cdot)+\cdots\big),
 \ k\neq-\langle
j\rangle,\\
&\tilde{\zeta}_{j}^{k}(t)=\frac{\frac{1}{2}h}{\tan\big(\frac{1}{2}h(k\cdot\tilde{\omega}-\tilde{\omega}_j)\big)\mathrm{i}}\big(F^k_{j0}(\cdot)+\cdots\big),\
k\neq\langle j\rangle,
\end{array} %
\end{equation}
where $j=1,\ldots,l$ and  the dots  stand  for power series in
$\sqrt{h}$.

\vskip1mm  $\bullet$ \textbf{Initial values.}

We  determine the initial values for the differential equations by
considering the conditions that \eqref{MFE-ERKN-0} is satisfied
without remainder term for $t = 0$. From
$\tilde{y}_{h}(t)=\tilde{y}^0$, it follows that
\begin{equation}\label{Initial values-1}%
\begin{aligned}
&\tilde{y}_0^0=\tilde{\zeta}_0(0)+\mathcal{O}(\sqrt{h}),\\
&\tilde{y}_{- j}^0=\tilde{\zeta}_{-j}^{-\langle j\rangle}(0)+\mathcal{O}(\sqrt{h}),\\
&\tilde{y}_{ j}^0=\tilde{\zeta}_{j}^{\langle j\rangle}(0)+\mathcal{O}(\sqrt{h}).\end{aligned} %
\end{equation}
Thus   we get the initial values $\tilde{\zeta}_0(0)$,
$\tilde{\zeta}_{-j}^{-\langle j\rangle}(0)$ and
$\tilde{\zeta}_{j}^{\langle j\rangle}(0)$.

\vskip1mm  $\bullet$ \textbf{Bounds of the coefficients functions.}

The bound \eqref{coefficient func} is immediately obtained on the
base of the above initial values, the ansatz \eqref{ansatz} and
Assumption \ref{ass}.

\vskip1mm  $\bullet$ \textbf{Defect.}

By using the Lipschitz continuous of the nonlinearity  and  the
standard convergence estimates, it is easy to prove the defect
\eqref{remainder}.
\end{proof}

In the light of the linear transform \eqref{change of variable}, the
modulated Fourier expansion for $y^n$ is given as follows.

\begin{theo}\label{energy thm2}
The numerical solution of the EI-T method \eqref{EI-T} admits the
following  modulated Fourier expansion
\begin{equation}
\begin{aligned} &y^{n}=  \zeta(t)+\sum\limits_{k\in\mathcal{N}^*}
\mathrm{e}^{\mathrm{i}(k \cdot \tilde{\omega})
t}\zeta^k(t)+R_{h,N}(t),
\end{aligned}
\label{MFE-ERKN}%
\end{equation}
where $\zeta(t)=P\tilde{\zeta}(t),\ \zeta^k(t)=P\tilde{\zeta}^k(t).$
 The bounds of these functions and the remainders are the same as
 those given in Theorem \ref{energy thm}.
 Moreover, we have $\zeta^{-k}=\overline{\zeta^{k}}$.
\end{theo}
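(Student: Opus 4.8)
The plan is to obtain this statement as a direct corollary of Theorem~\ref{energy thm} by undoing the unitary change of variable \eqref{change of variable}. Since $P$ is unitary, the relation $\tilde{y}(t)=P^\textup{H}y(t)$ inverts to $y(t)=P\tilde{y}(t)$, so in particular $y^n=P\tilde{y}^n$. First I would substitute the modulated Fourier expansion \eqref{MFE-ERKN-0} of $\tilde{y}^n$ and use linearity to pull the constant matrix $P$ inside each term. As $P$ does not depend on $t$, it commutes with the scalar oscillations $\mathrm{e}^{\mathrm{i}(k\cdot\tilde{\omega})t}$, so the expansion keeps exactly the form \eqref{MFE-ERKN} with $\zeta(t)=P\tilde{\zeta}(t)$, $\zeta^k(t)=P\tilde{\zeta}^k(t)$ and $R_{h,N}(t)=P\tilde{R}_{h,N}(t)$.

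For the size estimates, the point is that multiplication by the unitary $P$ preserves the Euclidean norm and, being constant, also preserves the norms of all derivatives. Consequently every bound in \eqref{coefficient func} together with the remainder bound \eqref{remainder} transfers verbatim from $\tilde{\zeta}$, $\tilde{\zeta}^k$, $\tilde{R}_{h,N}$ to $\zeta$, $\zeta^k$, $R_{h,N}$, with no change of constants and with the same dependence on $E,N,c_0,T$ and the same independence of $h$ and $\tilde{\omega}$. This is precisely the assertion that the bounds are the same as those in Theorem~\ref{energy thm}.

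The only part needing genuine verification is the symmetry $\zeta^{-k}=\overline{\zeta^{k}}$. I would derive it from the componentwise relation $\tilde{\zeta}_{-j}^{-k}=\overline{\tilde{\zeta}_{j}^{k}}$ of Theorem~\ref{energy thm} together with the block structure of $P$. Writing $\sigma$ for the real involution that interchanges the $\pm j$ blocks of a vector, this relation (together with its conjugate, read with $k\mapsto -k$) reads $\tilde{\zeta}^{-k}=\overline{\sigma\,\tilde{\zeta}^{k}}$. Because $\Omega=\frac{1}{\epsilon}QM$ is real while $\Lambda$ in \eqref{Lambda} carries the paired spectrum $\pm\lambda_j/\epsilon$, the factorisation $\Omega=P\mathrm{i}\Lambda P^\textup{H}$ forces the eigenvector block for $+\lambda_j$ to be the complex conjugate of the block for $-\lambda_j$; in matrix form this is exactly $P\sigma=\overline{P}$. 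Combining the two identities gives $\zeta^{-k}=P\tilde{\zeta}^{-k}=P\,\overline{\sigma\,\tilde{\zeta}^{k}}=(P\sigma)\overline{\tilde{\zeta}^{k}}=\overline{P}\,\overline{\tilde{\zeta}^{k}}=\overline{\zeta^{k}}$.

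The main, though mild, obstacle is pinning down the block-conjugation identity $P\sigma=\overline{P}$, since it depends on the ordering convention in \eqref{Lambda}, on the normalisation of eigenvectors within each eigenspace, and on the treatment of the zero blocks; this is where I would spend most of the care. A cleaner alternative that sidesteps the matrix bookkeeping is to note that the numerical solution $y^n$ of the \emph{real} system \eqref{IVP1} is real, so the expansion \eqref{MFE-ERKN} must be invariant under complex conjugation; matching the oscillation $\mathrm{e}^{\mathrm{i}(k\cdot\tilde{\omega})t}$ against its conjugate $\mathrm{e}^{-\mathrm{i}(k\cdot\tilde{\omega})t}$---legitimate because the slowly varying coefficients cannot reproduce a distinct non-resonant frequency---then yields $\zeta^{-k}=\overline{\zeta^{k}}$ at once.
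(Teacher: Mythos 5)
Your proposal is correct and takes essentially the same route as the paper: the paper obtains Theorem~\ref{energy thm2} from Theorem~\ref{energy thm} precisely by undoing the unitary change of variable \eqref{change of variable}, i.e.\ multiplying the expansion by $P$, and offers no further argument. The extra care you devote to the conjugation symmetry $\zeta^{-k}=\overline{\zeta^{k}}$ (via $P\sigma=\overline{P}$, or more cleanly via the reality of the numerical solution $y^n$ of the real system \eqref{IVP1}) fills in a detail the paper leaves entirely implicit.
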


\subsection{Long time energy conservation}
In this subsection, we study long time energy conservation of EI-T
integrator, which is derived by showing an almost-invariant  of the
functions of  modulated Fourier expansions.
\begin{theo}\label{first invariant thm}
 Let
$\vec{\tilde{\zeta}}=\big(\zeta^{k}\big)_{k\in \mathcal{N}}.$ Under
the conditions of Theorem \ref{energy thm}, there exists a function
$\widehat{\mathcal{H}}[\vec{\tilde{\zeta}}]$ such that
\begin{equation}
\widehat{\mathcal{H}}[\vec{\tilde{\zeta}}](t)=\widehat{\mathcal{H}}[\vec{\tilde{\zeta}}](0)+\mathcal{O}(th^{N})
\label{HH}%
\end{equation}
for $0\leq t\leq T.$ Moreover,  the function
$\widehat{\mathcal{H}}[\vec{\tilde{\zeta}}]$ can be expressed as
\begin{equation}\begin{aligned}
\widehat{\mathcal{H}}[\vec{\tilde{\zeta}}]=&\frac{1}{2}
\sum\limits_{j=-l,j\neq0}^l \Big(\tilde{\omega}_j
\big(\tilde{\zeta}_{-j}^{-\langle
j\rangle}\big)^\intercal\tilde{\zeta}_{-j}^{-\langle
j\rangle}+\tilde{\omega}_j \big(\tilde{\zeta}_{j}^{\langle
j\rangle}\big)^\intercal\tilde{\zeta}_{j}^{\langle j\rangle}\Big)
+V(P^\textup{H}\tilde{\zeta}(t))+\mathcal{O}(h).
\label{HH def}%
\end{aligned}
\end{equation}
\end{theo}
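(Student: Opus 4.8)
The plan is to follow the variational strategy for extracting an almost-invariant from a modulated Fourier expansion, as developed in \cite{Hairer00,hairer2006} and adapted to exponential integrators in \cite{Cohen12}, now carried out for the operator $\mathcal{L}(hD)$ of the EI-T scheme. The starting point is the modulation system \eqref{zeta expre}. First I would expose the gradient (variational) structure of its right-hand sides: since $\tilde{g}=-\nabla_{\tilde{y}}V(P\tilde{y})$, the sum $\sum_{s(\alpha)\sim k}\frac{1}{m!}\tilde{g}^{(m)}(\tilde{\zeta})(\tilde{\zeta})^{\alpha}$ is precisely the $k$-th modulation coefficient of the nonlinearity along $\tilde{y}_{h}(t)$, and hence, up to transposition and the conjugation relation $\tilde{\zeta}_{-j}^{-k}=\overline{\tilde{\zeta}_{j}^{k}}$, equals $-\nabla_{\tilde{\zeta}^{-k}}\mathcal{V}[\vec{\tilde{\zeta}}]$, where $\mathcal{V}[\vec{\tilde{\zeta}}]$ denotes the non-oscillatory ($s(\alpha)\in\mathcal{M}$) part of $V$ evaluated along the expansion. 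Differentiating this single scalar functional with respect to $\tilde{\zeta}^{-k}$ removes a factor of total index in $\mathcal{M}$ and hence reproduces exactly the terms with $s(\alpha)\sim k$, so that \eqref{zeta expre} is recast compactly as $\mathcal{L}(hD+\mathrm{i}(k\cdot\tilde{\omega})h)\tilde{\zeta}^{k}=-\frac{h}{2}\nabla_{\tilde{\zeta}^{-k}}\mathcal{V}[\vec{\tilde{\zeta}}]$ for all $k\in\mathcal{N}$.

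Next I would pair each modulation equation with $\dot{\tilde{\zeta}}^{-k}$ and sum over the symmetric index set $\mathcal{N}$. On the right-hand side the chain rule together with the gradient structure turns the summation into an exact total derivative, $-\frac{h}{2}\sum_{k}(\dot{\tilde{\zeta}}^{-k})^{\intercal}\nabla_{\tilde{\zeta}^{-k}}\mathcal{V}=-\frac{h}{2}\frac{\mathrm{d}}{\mathrm{d}t}\mathcal{V}[\vec{\tilde{\zeta}}]$. On the left-hand side I would expand $\mathcal{L}(hD+\mathrm{i}(k\cdot\tilde{\omega})h)$ by Proposition \ref{lhd pro} and use the symmetry of the EI-T method---which makes $\mathcal{L}$ self-adjoint with respect to the bilinear pairing, so that cross terms combine pairwise under $k\leftrightarrow-k$ and the conjugation relation---to show that $\sum_{k}(\dot{\tilde{\zeta}}^{-k})^{\intercal}\mathcal{L}(hD+\mathrm{i}(k\cdot\tilde{\omega})h)\tilde{\zeta}^{k}$ is itself a total derivative $\frac{\mathrm{d}}{\mathrm{d}t}\mathcal{Q}[\vec{\tilde{\zeta}}]$ of a quadratic form, up to a genuinely higher-order defect.

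It then remains to identify the leading terms. By the coefficient bounds \eqref{coefficient func}, the dominant contributions come from $\tilde{\zeta}_{\pm j}^{\pm\langle j\rangle}=\mathcal{O}(\sqrt{h})$; inserting the leading $\tan$-term of $\mathcal{L}$ and the ansatz \eqref{ansatz} for $\dot{\tilde{\zeta}}_{\pm j}^{\pm\langle j\rangle}$ reproduces the quadratic expression $\frac{1}{2}\sum_{j=-l,j\neq0}^{l}\tilde{\omega}_{j}\big((\tilde{\zeta}_{-j}^{-\langle j\rangle})^{\intercal}\tilde{\zeta}_{-j}^{-\langle j\rangle}+(\tilde{\zeta}_{j}^{\langle j\rangle})^{\intercal}\tilde{\zeta}_{j}^{\langle j\rangle}\big)$, while the leading part of $\mathcal{V}$ reconstructs the potential contribution $V(P^\textup{H}\tilde{\zeta})$ of \eqref{HH def}, the remaining terms of the $\mathcal{L}$-expansion entering only at order $\mathcal{O}(h)$. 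Defining $\widehat{\mathcal{H}}[\vec{\tilde{\zeta}}]=\mathcal{Q}[\vec{\tilde{\zeta}}]+V(P^\textup{H}\tilde{\zeta})+\mathcal{O}(h)$ accordingly, the two total-derivative identities combine so that $\frac{\mathrm{d}}{\mathrm{d}t}\widehat{\mathcal{H}}[\vec{\tilde{\zeta}}]=\mathcal{O}(h^{N})$, and integrating over $0\leq t\leq T$ yields \eqref{HH}.

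The main obstacle will be the left-hand side bookkeeping. Because $\mathcal{L}(hD+\mathrm{i}(k\cdot\tilde{\omega})h)$ is not simply $h\,\mathrm{d}/\mathrm{d}t$ but a full series in $hD$ with $k$-dependent trigonometric coefficients, I must verify that, after pairing with $\dot{\tilde{\zeta}}^{-k}$ and summing, all contributions organize into an exact derivative up to a defect small enough to yield the stated $\mathcal{O}(th^{N})$ bound of \eqref{HH}. This hinges on the discrete self-adjointness of $\mathcal{L}$ inherited from the symmetry of EI-T, which guarantees the pairwise cancellation of the non-derivative cross terms; without it the leading term would fail to be a pure time derivative. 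The delicate accounting is then to track the powers of $\sqrt{h}$ through the ansatz \eqref{ansatz} carefully enough that the truncation at $|k|\leq N$ produces exactly the claimed order rather than a weaker one, for which the numerical non-resonance condition \eqref{numerical non-resonance cond} is essential in controlling the small denominators $\sin(\frac{h}{2\epsilon}(k\cdot\lambda))$ appearing in the $\tan$-factors of \eqref{ansatz}.
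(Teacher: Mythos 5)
Your overall variational strategy (gradient structure of the nonlinearity, pairing the modulation equations, total-derivative identities) is the right family of ideas, but your choice of pairing vector breaks the proof at its central step. The paper multiplies the $k$-th modulation equation by the time derivative of the \emph{full modulated term} $y_h^{-k}(t)=\mathrm{e}^{-\mathrm{i}(k\cdot\tilde{\omega})t}\zeta^{-k}(t)$, so that after factoring out the exponentials the pairing vector is $\dot{\zeta}^{-k}-\mathrm{i}(k\cdot\tilde{\omega})\zeta^{-k}$, cf.\ \eqref{duu-new1}; you pair with $\dot{\tilde{\zeta}}^{-k}$ alone and drop the $-\mathrm{i}(k\cdot\tilde{\omega})\tilde{\zeta}^{-k}$ part. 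The dropped term is not a small correction: it is the sole source of the quadratic part of \eqref{HH def}. Indeed, on the dominant diagonal modes ($k=\pm\langle j\rangle$, block $\pm j$) the argument of the tan-term of $L(hD+\mathrm{i}h(k\cdot\tilde{\omega}))$ is $\frac{h}{2}\big((k\cdot\tilde{\omega})\mp\tilde{\omega}_j\big)=0$, so the tan-term vanishes there and, contrary to your leading-order identification, cannot reproduce $\frac{1}{2}\sum_j\tilde{\omega}_j(\cdots)$. What produces that expression in the paper is the omitted part of the pairing hitting the first-derivative term $\frac{1}{2}hD$ of the operator: $\frac{2}{h}\,\mathrm{Im}\big[\big(-\mathrm{i}\tilde{\omega}_j\overline{\tilde{\zeta}_j^{\langle j\rangle}}\big)^\intercal\tfrac{h}{2}\dot{\tilde{\zeta}}_j^{\langle j\rangle}\big]=-\frac{\tilde{\omega}_j}{2}\frac{d}{dt}\big(\overline{\tilde{\zeta}_j^{\langle j\rangle}}^{\,\intercal}\tilde{\zeta}_j^{\langle j\rangle}\big)$, whereas your pairing gives $\mathrm{Im}\big[\dot{\overline{\tilde{\zeta}_j^{\langle j\rangle}}}^{\,\intercal}\dot{\tilde{\zeta}}_j^{\langle j\rangle}\big]=0$ on these modes. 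Consequently the almost-invariant your construction can deliver is, to leading order, $V(P^{\textup{H}}\tilde{\zeta})+\mathcal{O}(h)$, i.e.\ essentially $\widehat{\mathcal{H}}-\widehat{\mathcal{M}}$ (compare \eqref{HH def} with \eqref{mm def}); it does not have the form \eqref{HH def} and could not be matched with the total energy $H(y^n)$ in Theorem \ref{no sym Long-time thm}.

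There is a second, structural problem with your claimed mechanism for the left-hand side. In the $\tilde{\zeta}$ variables the conjugation relation carries a flip of the block index, $\tilde{\zeta}_{-j}^{-k}=\overline{\tilde{\zeta}_{j}^{k}}$, so $\sum_k(\dot{\tilde{\zeta}}^{-k})^\intercal L(hD+\mathrm{i}h(k\cdot\tilde{\omega}))\tilde{\zeta}^k$ couples \emph{different} modulation functions ($\overline{\tilde{\zeta}_{-m}^{k}}$ against $\tilde{\zeta}_{m}^{k}$); the ``magic formulas'' require the same function in both slots, so they do not apply term by term, and symmetry/self-adjointness of the method does not turn the leftover cross terms (which are far larger than $\mathcal{O}(h^{N})$) into exact derivatives. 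The paper avoids this by passing to the original variables, where $\zeta^{-k}=\overline{\zeta^{k}}$ holds componentwise (Theorem \ref{energy thm2}), applying the magic formulas there, and only then transforming back. Your pairing could only be rescued by additionally invoking the phase-invariance identity \eqref{almost in 2}---precisely the ingredient of the \emph{second} almost-invariant---to relate it to the correct pairing; your proposal never uses it, so the central total-derivative step does not close, and neither \eqref{HH} with defect $\mathcal{O}(th^{N})$ nor the expression \eqref{HH def} follows as written.
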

\begin{proof}
From the proof of Theorem \ref{energy thm}, it follows that
\begin{equation*}
\begin{aligned}
& L(hD) \tilde{y}^k_{h}(t)=\frac{h}{2}
\tilde{g}(\tilde{y}_{h}(t))+\mathcal{O}(h^{N+1}),
\end{aligned}
%\label{methods-inva}%
\end{equation*}
where we use  the   denotations \begin{equation*}
\begin{aligned}\tilde{y}_{h}(t)=\sum\limits_{k\in\mathcal{N}}\tilde{y}^k_{h}(t)\quad \textmd{with}\quad  \ \tilde{y}^k_{h}(t)=\mathrm{e}^{\mathrm{i}(k \cdot\tilde{\omega})
t}\tilde{\zeta}^k(t).
\end{aligned}
\end{equation*} Multiplication of this result with $P$ yields
\begin{equation*}
\begin{aligned}
& PL(hD)P^\textup{H} P\tilde{y}_{h}(t)=PL(hD)P^\textup{H}
y_{h}(t)\\
=&\frac{h}{2}
P\tilde{g}(\tilde{y}_{h}(t))+\mathcal{O}(h^{N+1})=\frac{h}{2}
g(y_{h}(t))+\mathcal{O}(h^{N+1}),
\end{aligned}
%\label{methods-inva}%
\end{equation*}
where \begin{equation*}
\begin{aligned}y_{h}(t)=\sum\limits_{k\in\mathcal{N}}y^k_{h}(t)\quad \textmd{with}\quad  \ y^k_{h}(t)=\mathrm{e}^{\mathrm{i}(k \cdot\tilde{\omega})
t}\zeta^k(t).
\end{aligned}
\end{equation*}
For the terms of $y^k_{h}$, one gets
\begin{equation}
\begin{aligned} &PL(hD)P^\textup{H} y^k_{h}(t)=-\frac{h}{2}\nabla_{y^{-k}}\mathcal{V}(\vec{y}(t))+\mathcal{O}(h^{N+1}),
\end{aligned}\label{lhdy}%
\end{equation}
where $\mathcal{V}(\vec{y}(t))$ is defined as
\begin{equation}
\begin{aligned}
&\mathcal{V}(\vec{y}(t))=V(y_h^0(t))+
\sum\limits_{s(\alpha)=0}\frac{1}{m!}V^{(m)}(y_h^0(t))
(y_h(t))^{\alpha}
\end{aligned}
\label{newuu}%
\end{equation}
with $$\vec{y}(t)=\big( y_h^{k}(t)\big)_{k\in \mathcal{N}}.$$
 Multiplying \eqref{lhdy} with $
\big(\dot{y}_h^{-k}(t)\big)^\intercal$ and summing up gives
\begin{equation*}
\begin{aligned} &\frac{2}{h}\sum\limits_{k\in\mathcal{N}}
 \big(\dot{y}_h^{-k}(t)\big)^\intercal PL(hD)P^\textup{H}
y^k_{h}(t)+\frac{d}{dt}\mathcal{V}(\vec{y}(t))=\mathcal{O}(h^{N}).
\end{aligned}
\end{equation*}
 By switching  to
the quantities  $\zeta^k$, we obtain
\begin{equation}
\begin{aligned}
 \mathcal{O}(h^{N}) =& \frac{2}{h}\sum\limits_{k\in\mathcal{N}}
 \big(\dot{\zeta}^{-k}(t)-\textmd{i} (k \cdot\tilde{\omega}) \zeta^{-k}(t)\big)^\intercal
P L(hD+\mathrm{i}h(k \cdot\tilde{\omega})) P^\textup{H} \zeta^k(t)+\frac{d}{dt}\mathcal{V}(\vec{\zeta}(t))\\
= &\frac{2}{h}\sum\limits_{k\in\mathcal{N}}
 \big(\dot{ \overline{\zeta^{k}}}(t)-\textmd{i} (k \cdot\tilde{\omega})
 \overline{\zeta^{k}}(t)\big)^\intercal
P L(hD+\mathrm{i}h(k \cdot\tilde{\omega})) P^\textup{H}
\zeta^k(t)+\mathcal{O}(h^{N})\\
= &\frac{2}{h}\sum\limits_{k\in\mathcal{N}}
 \big(\dot{\overline{\tilde{\zeta}}}^k(t)-\textmd{i} (k \cdot\tilde{\omega})
 \overline{\tilde{\zeta}^{k}}(t)\big)^\intercal
P^\textup{H}P L(hD+\mathrm{i}h(k \cdot\tilde{\omega})) P^\textup{H}
P\tilde{\zeta}^k(t)+\mathcal{O}(h^{N})\\
= &\frac{2}{h}\sum\limits_{k\in\mathcal{N}}
 \big(\dot{\overline{\tilde{\zeta}}}^k(t)-\textmd{i} (k \cdot\tilde{\omega})
 \overline{\tilde{\zeta}^{k}}(t)\big)^\intercal
 L(hD+\mathrm{i}h(k \cdot\tilde{\omega})) \tilde{\zeta}^k(t)+\mathcal{O}(h^{N}).
\end{aligned}
\label{duu-new1}%
\end{equation}

By  the Taylor expansions of $  L(hD)$   given in Proposition
 \ref{lhd pro} and the  ``magic formulas" on p. 508
of \cite{hairer2006}, it is easy to check that $\textmd{Im}
\big(\dot{ \overline{\tilde{\zeta}}}^{k}(t)\big)^\intercal
L(hD+\mathrm{i}h(k \cdot\tilde{\omega})) \hat{\zeta}^k(t)$ and
$\textmd{Im} \big(\textmd{i} (k \cdot\tilde{\omega})
\overline{\tilde{\zeta}^{k}}(t)\big)^\intercal L(hD+\mathrm{i}h(k
\cdot\tilde{\omega})) \tilde{\zeta}^k(t)$ are both total
derivatives.
 Therefore,    the imaginary part of the
 right-hand side of
\eqref{duu-new1} is the total derivative.  There exists a function
$\widehat{\mathcal{H}}$ such
 that
$\frac{d}{dt}\widehat{\mathcal{H}}[\vec{\zeta}](t)=\mathcal{O}(h^{N})$
and the statement \eqref{HH} is obtained by an integration.

 The construction \eqref{HH} of $\widehat{\mathcal{H}}$ is shown
by considering the previous analysis and  the bounds of Theorem
\ref{energy thm}.

\end{proof}

The first main result about the long time energy conservation of
EI-T is given as follows.
\begin{theo} \label{no sym Long-time
thm} Under the conditions of Theorem \ref{first invariant thm},  one
obtains
\begin{equation*}
\begin{aligned}
   \mathcal{H}
[\vec{\zeta}](t)&=H(y^n) +\mathcal{O}(h)
\end{aligned}
\end{equation*}
for $0\leq t=nh\leq T.$ Moreover, for the long time energy
conservation of EI-T, we have  \begin{equation*}
\begin{aligned}
%\tilde{E}(\tilde{x}^n,\tilde{v}^n)&=\tilde{E}(\tilde{x}^0,\tilde{v}^0)+\mathcal{O}\Big(\abs{\frac{\tan(\frac{1}{2}
%h\tilde{\omega})}{\frac{1}{2}
%h\tilde{\omega}}}h\Big)+\mathcal{O}(h),\\
H(y^n)&=H(y^0)+\mathcal{O}(h)
\end{aligned}
\end{equation*}
for $0\leq nh\leq h^{-N+1}.$ The constants symbolized by
$\mathcal{O}$ depend on $N, T$ and the constants in the assumptions,
but are independent of $n,\ h,\ \epsilon$.
\end{theo}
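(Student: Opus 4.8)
The plan is to establish the theorem in two stages: first, to identify the almost-invariant $\mathcal{H}[\vec{\zeta}]$ of Theorem \ref{first invariant thm} with the physical energy $H(y^n)$ up to $\mathcal{O}(h)$ at every grid point; and second, to combine this identification with the near-invariance \eqref{HH} through a window-patching argument that propagates the conclusion from the single interval $[0,T]$ up to the time scale $h^{-N+1}$.

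For the first stage I would insert the modulated Fourier expansion \eqref{MFE-ERKN} (equivalently \eqref{MFE-ERKN-0} in the diagonalised variables) into the transformed energy $\tilde{H}(\tilde{y})=\frac12\tilde{y}^\intercal\Lambda\tilde{y}+V(P\tilde{y})$ of \eqref{new energy of cha} and expand. In the kinetic part the diagonal weights are $\tilde{\omega}_j=\lambda_j/\epsilon$, so a contribution carrying two modulation factors $\tilde{\zeta}^{k}_{\pm j}$ of size $\mathcal{O}(\sqrt h)$ (indeed $\mathcal{O}(\sqrt{\epsilon})$ under the step-size bound $h/\epsilon\ge c_0$) is weighted up to $\mathcal{O}(1)$, whereas every index with $|k|\ge 2$ or with $k\ne\pm\langle j\rangle$ is pushed below $\mathcal{O}(h)$ by the bounds \eqref{coefficient func}. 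Collecting the surviving resonant (non-oscillatory) contributions and using $\tilde{\zeta}^{-k}_{-j}=\overline{\tilde{\zeta}^{k}_{j}}$ should reproduce exactly the leading quadratic sum of \eqref{HH def}, while a Taylor expansion of the potential about the smooth coefficient yields $V(P^\textup{H}\tilde{\zeta})+\mathcal{O}(\sqrt h)$; the remainder $\tilde{R}_{h,N}=\mathcal{O}(th^{N-1})$ enters only at $\mathcal{O}(h)$ on the fixed window. The care needed here is to confirm that the genuinely oscillatory pieces of $H(y^n)$ — those attached to phases $\mathrm{e}^{\mathrm{i}(k\cdot\tilde{\omega})t}$ with $k\notin\mathcal{M}$ — combine to $\mathcal{O}(h)$ at each fixed $t$, for which the non-resonance condition \eqref{numerical non-resonance cond} and the structure of $L(hD)$ from Proposition \ref{lhd pro} are the relevant tools.

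For the second stage, on a single window $[0,T]$ the chain
\[
H(y^n)=\mathcal{H}[\vec{\zeta}](nh)+\mathcal{O}(h)=\mathcal{H}[\vec{\zeta}](0)+\mathcal{O}(nh\,h^{N})+\mathcal{O}(h)=H(y^0)+\mathcal{O}(nh^{N+1})+\mathcal{O}(h)
\]
already gives the assertion whenever $nh^{N+1}=\mathcal{O}(h)$. To reach $nh\le h^{-N+1}$ I would cover the interval by $\mathcal{O}(h^{-N+1}/T)$ consecutive windows of fixed length $T$, reconstructing the expansion of Theorem \ref{energy thm} from the numerical state at the start of each window; on every window \eqref{HH} contributes a drift $\mathcal{O}(Th^{N})$, so the drifts sum to $\mathcal{O}(h)$. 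The delicate point is the matching at the junctions: two expansions representing the same numerical solution near a restart must have coefficients that coincide up to $\mathcal{O}(h^{N})$, so that $\mathcal{H}[\vec{\zeta}]$ changes by no more than $\mathcal{O}(h^{N})$ across each of the $\mathcal{O}(h^{-N+1})$ restarts and the accumulated junction error is again $\mathcal{O}(h)$. One then ties the two endpoints back to $H(y^n)$ and $H(y^0)$ through the first-stage identification.

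I expect the main obstacle to be exactly this patching step. Establishing that the modulation coefficients are determined by the numerical solution up to $\mathcal{O}(h^{N})$ requires a uniqueness and stability estimate for the modulation system \eqref{zeta expre}, and one must verify that the $\tilde{\omega}_j\sim\epsilon^{-1}$ weights in the kinetic part of \eqref{HH def} do not amplify these $\mathcal{O}(h^{N})$ coefficient discrepancies beyond $\mathcal{O}(h)$ once summed over all windows; here the lower bound $h/\epsilon\ge c_0$ and the non-resonance condition \eqref{numerical non-resonance cond} are what keep the amplified quantities under control. The first stage, by contrast, is largely bookkeeping once the correct $\mathcal{O}(1)$ terms have been isolated and matched against \eqref{HH def}.
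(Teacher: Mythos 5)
Your proposal follows essentially the same route as the paper: the paper's proof likewise inserts the bounds of Theorem \ref{energy thm} into $H(y^n)=\tilde{H}(\tilde{y}^n)$ to identify the energy with the expression \eqref{HH def} for the almost-invariant up to $\mathcal{O}(h)$, and then obtains the long-time statement by the standard window-restarting argument, which it invokes by reference to Section XIII of \cite{hairer2006}. Your two stages are exactly these two steps, with the patching details (drift per window, junction matching of coefficients) spelled out explicitly rather than cited.
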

\begin{proof}
In the light of   the bounds given in  Theorem \ref{energy thm},  we
deduce that
 \begin{equation}\label{HIE}\begin{aligned} &H(y^n)=\tilde{H}(\tilde{y}^n)=\frac{1}{2}
\sum\limits_{j=-l,j\neq0}^l \Big(\tilde{\omega}_j
\big(\tilde{\zeta}_{-j}^{-\langle
j\rangle}\big)^\intercal\tilde{\zeta}_{-j}^{-\langle
j\rangle}+\tilde{\omega}_j \big(\tilde{\zeta}_{j}^{\langle
j\rangle}\big)^\intercal\tilde{\zeta}_{j}^{\langle j\rangle}\Big)
+V(P^\textup{H}\tilde{\zeta}(t)) +\mathcal{O}(h).
\end{aligned}\end{equation}
A comparison between \eqref{HH def} and \eqref{HIE}  yields the
first result of this theorem. The second statement  of this theorem
is easily obtained by following the same way used in Section XIII of
\cite{hairer2006}.

\hfill
\end{proof}

\subsection{Long time  kinetic energy conservation}
We now turn to the long time  conservation of kinetic energy. Define
the vector functions  of $\vec{\zeta}(\lambda,t)$ as
$$\vec{\zeta}(\lambda,t)=(\mathrm{e}^{\mathrm{i}(k \cdot
\tilde{\omega}) \lambda}\zeta^k(t))_{k\in\mathcal{N}}.$$
 Then it can be observed from the definition \eqref{newuu}
  that $\mathcal{V} (\vec{\zeta}(\lambda,t))$ does  not depend on
$\lambda$. Thus, the following result is obtained
\begin{equation}\label{almost in 2}%
\begin{aligned}0=& \frac{d}{d\lambda}\mid_{\lambda=0}\mathcal{V}
(\vec{\zeta}(\lambda,t))=\sum\limits_{k\in\mathcal{N}}\mathrm{i}(k
\cdot\tilde{\omega})(\zeta^{-k}(t))^\intercal
\nabla_{x^{-k}}\mathcal{V} (\vec{\zeta}(t))\\
= &\frac{2}{-h }\sum\limits_{k\in\mathcal{N}} \mathrm{i}(k
\cdot\tilde{\omega})\big(\zeta^{-k}(t)\big)^\intercal PL(hD+\mathrm{i}h(k \cdot\tilde{\omega}))P^\textup{H}  \zeta^k(t)+\mathcal{O}(h^{N})\\
=&\frac{2}{-h }\sum\limits_{k\in\mathcal{N}} \mathrm{i}(k
\cdot\tilde{\omega}) \big( \overline{\zeta^{k}}(t)\big)^\intercal
PL(hD+\mathrm{i}h(k \cdot\tilde{\omega}))P^\textup{H}
\zeta^k(t)+\mathcal{O}(h^{N})\\
=&\frac{2}{-h }\sum\limits_{k\in\mathcal{N}} \mathrm{i}(k
\cdot\tilde{\omega}) \big(
\overline{\tilde{\zeta}^{k}}(t)\big)^\intercal P^\textup{H}
PL(hD+\mathrm{i}h(k \cdot\tilde{\omega}))P^\textup{H}P
\tilde{\zeta}^k(t)+\mathcal{O}(h^{N})\\
=&\frac{2}{-h }\sum\limits_{k\in\mathcal{N}} \mathrm{i}(k
\cdot\tilde{\omega}) \big(
\overline{\tilde{\zeta}^{k}}(t)\big)^\intercal L(hD+\mathrm{i}h(k
\cdot\tilde{\omega})) \tilde{\zeta}^k(t)+\mathcal{O}(h^{N}).
\end{aligned}
\end{equation}
Similar to the analysis of the above subsection, it can be verified
that the right hand size of \eqref{almost in 2} is a total
derivative. Therefore,      we get the second almost-invariant as
follows.

\begin{theo}\label{2 invariant thm}
Under the conditions of Theorem \ref{energy thm}, for $0\leq t\leq
T$, there exists a function $\widehat{\mathcal{M}}[\vec{\zeta}]$
such that
\begin{equation}
\widehat{\mathcal{M}}[\vec{\tilde{\zeta}}](t)=\widehat{\mathcal{M}}[\vec{\tilde{\zeta}}](0)+\mathcal{O}(th^{N}),
\label{mm}%
\end{equation}
where
\begin{equation}\begin{aligned}
\widehat{\mathcal{M}}[\vec{\tilde{\zeta}}]=\frac{1}{2}
\sum\limits_{j=-l,j\neq0}^l \Big(\tilde{\omega}_j
\big(\tilde{\zeta}_{-j}^{-\langle
j\rangle}\big)^\intercal\tilde{\zeta}_{-j}^{-\langle
j\rangle}+\tilde{\omega}_j \big(\tilde{\zeta}_{j}^{\langle
j\rangle}\big)^\intercal\tilde{\zeta}_{j}^{\langle j\rangle}\Big)
+\mathcal{O}(h).
\label{mm def}%
\end{aligned}
\end{equation}
\end{theo}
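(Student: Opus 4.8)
The plan is to reproduce, almost verbatim, the mechanism that produced the first almost-invariant $\widehat{\mathcal{H}}$ in Theorem~\ref{first invariant thm}, but now driven by the phase-rotation symmetry of the potential rather than by a time derivative of the modulation system. The starting point is already available: the last line of \eqref{almost in 2} states that
\begin{equation*}
S(t):=\frac{2}{-h}\sum_{k\in\mathcal{N}}\mathrm{i}(k\cdot\tilde{\omega})\big(\overline{\tilde{\zeta}^{k}}(t)\big)^\intercal L(hD+\mathrm{i}h(k\cdot\tilde{\omega}))\tilde{\zeta}^{k}(t)=\mathcal{O}(h^{N}),
\end{equation*}
a consequence of the fact that $\mathcal{V}(\vec{\zeta}(\lambda,t))$ is independent of $\lambda$. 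Thus $S(t)$ is already known to be $\mathcal{O}(h^N)$; the only thing left to do is to exhibit it as a total derivative, after which its explicit primitive will serve as the invariant $\widehat{\mathcal{M}}$.

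First I would observe that each summand of $S(t)$ is precisely the second of the two expressions whose imaginary part was identified as a total derivative in the proof of Theorem~\ref{first invariant thm}; there the term $\big(\mathrm{i}(k\cdot\tilde{\omega})\overline{\tilde{\zeta}^{k}}\big)^\intercal L(hD+\mathrm{i}h(k\cdot\tilde{\omega}))\tilde{\zeta}^{k}$ was already shown, via the Taylor expansion of $L$ in Proposition~\ref{lhd pro} and the ``magic formulas'' on p.~508 of \cite{hairer2006}, to be the time derivative of an explicit primitive up to $\mathcal{O}(h^N)$. Summing over $k\in\mathcal{N}$ and using the pairing $\tilde{\zeta}^{-k}=\overline{\tilde{\zeta}^{k}}$ to render the sum real, I obtain $S(t)=\tfrac{d}{dt}\widehat{\mathcal{M}}[\vec{\tilde{\zeta}}](t)+\mathcal{O}(h^N)$ for a suitable explicit function $\widehat{\mathcal{M}}$. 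Combined with the independently established $S(t)=\mathcal{O}(h^N)$ this yields $\tfrac{d}{dt}\widehat{\mathcal{M}}[\vec{\tilde{\zeta}}](t)=\mathcal{O}(h^N)$, and integrating from $0$ to $t$ gives \eqref{mm}.

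It then remains to extract the explicit leading form \eqref{mm def}. Since $\widehat{\mathcal{M}}$ is assembled from the magic-formula primitives of the individual summands, its size is governed by the coefficient bounds \eqref{coefficient func} and the ansatz \eqref{ansatz}. The leading symbol of $L(hD+\mathrm{i}h(k\cdot\tilde{\omega}))$ is $\tan\big(\tfrac12 h((k\cdot\tilde{\omega})I-\tilde{\Omega})\big)\mathrm{i}$, which degenerates exactly at the near-resonant pairs $k=\pm\langle j\rangle$ acting on the $\pm j$ blocks, where the tangent argument vanishes; every other index/block combination is suppressed to $\mathcal{O}(h)$ by the $\mathcal{O}(h^{(|k|+1)/2})$ estimates. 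Collecting the survivors produces the quadratic expression in \eqref{mm def}, and, in contrast to $\widehat{\mathcal{H}}$ in \eqref{HH def}, no potential term appears, because the $\lambda$-derivative in \eqref{almost in 2} has annihilated the contribution of $\mathcal{V}$.

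I expect the main obstacle to be the total-derivative verification of the middle step: one has to expand $L$ to the order dictated by $N$ and apply the magic formulas summand by summand, checking that the non-primitive remainders are uniformly $\mathcal{O}(h^N)$ and that the surviving primitives really do assemble into the stated quadratic form (keeping track of the large factors $\tilde{\omega}_j\sim1/\epsilon$, which combine with the $\mathcal{O}(h)$ quadratic terms through $h/\epsilon\geq c_0$ to give an $\mathcal{O}(1)$ invariant). Once this bookkeeping is settled, the integration producing \eqref{mm} and the identification of \eqref{mm def} via Theorem~\ref{energy thm} are routine.
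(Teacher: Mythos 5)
Your proposal is correct and follows essentially the same route as the paper: it starts from the $\lambda$-independence of $\mathcal{V}(\vec{\zeta}(\lambda,t))$ to get the identity \eqref{almost in 2}, invokes the same total-derivative property (via Proposition \ref{lhd pro} and the ``magic formulas'') already used for $\widehat{\mathcal{H}}$ in Theorem \ref{first invariant thm}, integrates to obtain \eqref{mm}, and reads off \eqref{mm def} from the bounds \eqref{coefficient func}, including the correct observation that the potential term is absent because the $\lambda$-derivative annihilates $\mathcal{V}$.
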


 Then, we obtain the   result about the long time  kinetic energy
conservation of EI-T.
\begin{theo} \label{2 sym Long-time
thm} Under the conditions of Theorem \ref{first invariant thm}, we
have
\begin{equation*}
\begin{aligned}
\widehat{\mathcal{M}}[\vec{\tilde{\zeta}}](t)&=K(y^n)
+\mathcal{O}(h)
\end{aligned}
\end{equation*}
for $0\leq t=nh\leq T.$ Moreover,  for the long time kinetic energy
conservation of EI-T, it is true  that
\begin{equation*}
\begin{aligned}
%\tilde{E}(\tilde{x}^n,\tilde{v}^n)&=\tilde{E}(\tilde{x}^0,\tilde{v}^0)+\mathcal{O}\Big(\abs{\frac{\tan(\frac{1}{2}
%h\tilde{\omega})}{\frac{1}{2}
%h\tilde{\omega}}}h\Big)+\mathcal{O}(h),\\
K(y^n)&=K(y^0)+\mathcal{O}(h)
\end{aligned}
\end{equation*}
for $0\leq nh\leq h^{-N+1}.$ The constants symbolized by
$\mathcal{O}$ depend on $N, T$ and the constants in the assumptions,
but are independent of $n,\ h,\ \epsilon$.
\end{theo}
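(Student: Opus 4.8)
The plan is to follow the same two-step scheme used for the energy in Theorem~\ref{no sym Long-time thm}, replacing the total energy $H$ by the kinetic energy $K$ and the almost-invariant $\widehat{\mathcal{H}}$ by $\widehat{\mathcal{M}}$. First I would establish the pointwise identity $\widehat{\mathcal{M}}[\vec{\tilde{\zeta}}](t)=K(y^n)+\mathcal{O}(h)$; then I would upgrade the short-time almost-invariance \eqref{mm} of Theorem~\ref{2 invariant thm} to genuine long-time conservation on $[0,h^{-N+1}]$ by patching together modulated Fourier expansions on consecutive intervals.

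For the first step, I would evaluate the kinetic energy directly on the modulated Fourier expansion. Since $K(y^n)=\tilde{K}(\tilde{y}^n)=\tfrac{1}{2}(\tilde{y}^n)^\intercal\Lambda\tilde{y}^n$ by \eqref{newmomentum for B}, I would insert \eqref{MFE-ERKN-0} and expand the quadratic form. The diagonal matrix $\Lambda$ weights the $\pm j$ blocks by $\pm\tilde{\omega}_j$ and annihilates the index-$0$ block (because $\lambda_0=0$), so after using the bounds \eqref{coefficient func} together with the reality relation $\tilde{\zeta}_{-j}^{-k}=\overline{\tilde{\zeta}_{j}^{k}}$, the only contributions surviving at leading order are the frequency-matched products $(\tilde{\zeta}_{-j}^{-\langle j\rangle})^\intercal\tilde{\zeta}_{-j}^{-\langle j\rangle}$ and $(\tilde{\zeta}_{j}^{\langle j\rangle})^\intercal\tilde{\zeta}_{j}^{\langle j\rangle}$, while every genuinely oscillatory cross term and every higher-index term is $\mathcal{O}(h)$. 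This is precisely the quadratic part already isolated in \eqref{HIE}, so $K(y^n)$ coincides, up to $\mathcal{O}(h)$, with the leading expression \eqref{mm def} for $\widehat{\mathcal{M}}[\vec{\tilde{\zeta}}]$, which gives the first assertion.

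For the second step, I would combine this identity with the almost-invariance \eqref{mm}: evaluating the latter at $t=0$ gives $\widehat{\mathcal{M}}[\vec{\tilde{\zeta}}](0)=K(y^0)+\mathcal{O}(h)$, whence $K(y^n)=K(y^0)+\mathcal{O}(h)+\mathcal{O}(th^N)$ on one expansion interval $[0,T]$. To reach times $nh\le h^{-N+1}$ I would re-expand on $\mathcal{O}(h^{-N+1})$ successive intervals of length $\mathcal{O}(1)$, exactly as in Section~XIII of \cite{hairer2006}: each interval contributes a drift $\mathcal{O}(h^{N})$ to the almost-invariant, the $\mathcal{O}(h^{-N+1})$ intervals accumulate to a total drift $\mathcal{O}(h)$, and the $\mathcal{O}(h)$ in the relation between $\widehat{\mathcal{M}}$ and $K$ does not accumulate, yielding $K(y^n)=K(y^0)+\mathcal{O}(h)$ for $0\le nh\le h^{-N+1}$.

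I expect the main obstacle to lie in the uniformity underlying the first step: because $\Lambda$ carries the large factor $1/\epsilon$, the surviving products are only $\mathcal{O}(1)$ rather than $\mathcal{O}(h)$, so one must verify that all discarded terms remain $\mathcal{O}(h)$ \emph{uniformly} in $h$, $\epsilon$ and $\tilde{\omega}$. This is where the lower bound $h/\epsilon\ge c_0$ and the numerical non-resonance condition \eqref{numerical non-resonance cond} enter, controlling the $1/\tan$-type denominators in the ansatz \eqref{ansatz} so that the bounds \eqref{coefficient func} hold with $\epsilon$-independent constants. The patching in the second step is routine once the energy case is in hand, since the argument is structurally identical to that of Theorem~\ref{no sym Long-time thm}.
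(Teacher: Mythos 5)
Your proposal is correct and takes essentially the same route as the paper: the paper states this theorem without a separate proof, the intended argument being exactly the kinetic-energy analogue of the proof of Theorem \ref{no sym Long-time thm} --- evaluate $K(y^n)=\tilde{K}(\tilde{y}^n)$ on the modulated Fourier expansion so that the surviving quadratic terms reproduce the expression \eqref{mm def} for $\widehat{\mathcal{M}}$ (just as \eqref{HIE} did for $\widehat{\mathcal{H}}$), then combine the almost-invariance \eqref{mm} of Theorem \ref{2 invariant thm} with the interval-patching argument of Section XIII of \cite{hairer2006}. Your closing remark on uniformity in $\epsilon$ (the $1/\epsilon$ weight in $\Lambda$ balanced against the bounds \eqref{coefficient func} and the non-resonance condition) is consistent with, and if anything more explicit than, the paper's own treatment of the same point.
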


\section{Long-time  conservation of the  EI-O integrators} \label{sec: Long-time
of scheme 2} For solving the transformed system
\eqref{necharged-sts-first order}, the EI-O integrators \eqref{im
integ one-stage} are given as
\begin{equation}\label{new-im integ one-stage}
\left\{\begin{array}[c]{ll} &\tilde{Y}^{n+c_1}=e^{c_1
\mathrm{i}h\tilde{\Omega}}\tilde{y}^n+h
 a_{11}(\mathrm{i}h\tilde{\Omega})\tilde{F}(\tilde{Y}^{n+c_1}),\\
&\tilde{y}^{n+1}=e^{\mathrm{i}h\tilde{\Omega}}\tilde{y}^n+h
 b_{1}(\mathrm{i}h\tilde{\Omega})\tilde{F}(\tilde{Y}^{n+c_1}).
\end{array}\right.
\end{equation}
In this section, we study the long-time  conservations of these
one-stage implicit EI-O integrators.  It is assumed that these
integrators satisfy the condition \eqref{rev cond} in the analysis
of this section.

We start by defining another three operators
\begin{equation}\label{new L1L2}
 \begin{array}[c]{ll}
\hat{L}_1(hD)=(e^{hD}-e^{
\mathrm{i}h\tilde{\Omega}})(e^{\mathrm{i}(1-c_1)h\tilde{\Omega}}e^{c_1hD})^{-1},\\
\hat{L}_2(hD)=(e^{-\mathrm{i}(1-c_1)h\tilde{\Omega}}e^{(1-c_1)hD}+e^{\mathrm{i}
c_1h\tilde{\Omega}}e^{-c_1hD}),\\
 \hat{L}(hD)=(\hat{L}_1\circ
  \hat{L}^{-1}_2\circ  )
(hD).\end{array}\end{equation} It can be checked easily that they
have the following important property.
\begin{prop}\label{new-lhd pro}
For the operator $\hat{L}(hD)$ given in \eqref{new L1L2}, it is true
that   \begin{equation}\label{LEL}\hat{L}(hD)=L(hD),\end{equation}
where $L(hD)$ is defined in \eqref{L1L2}. Therefore, $\hat{L}(hD)$
has the same Taylor series as given in Proposition \eqref{lhd pro}.
\end{prop}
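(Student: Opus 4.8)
The plan is to verify the operator identity \eqref{LEL} by a direct algebraic manipulation that exploits the fact that every operator appearing in \eqref{new L1L2} and \eqref{L1L2} is a function of the two commuting operators $e^{hD}$ and $e^{\mathrm{i}h\tilde{\Omega}}$. Since $\tilde{\Omega}$ is a constant diagonal matrix and $D$ is the differentiation operator, they commute, so all the exponential factors below may be manipulated as commuting symbols. First I would introduce the shorthand $a=e^{hD}$ and $b=e^{\mathrm{i}h\tilde{\Omega}}$, so that $L(hD)=(a-b)(a+b)^{-1}$, and rewrite the two auxiliary operators as $\hat{L}_1(hD)=(a-b)\,b^{-(1-c_1)}a^{-c_1}$ and $\hat{L}_2(hD)=b^{-(1-c_1)}a^{1-c_1}+b^{c_1}a^{-c_1}$.

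The key step is to factor $\hat{L}_2$. Pulling out the common factor $b^{-(1-c_1)}a^{-c_1}$ from both summands gives
$$\hat{L}_2(hD)=b^{-(1-c_1)}a^{-c_1}\big(a^{1-c_1}a^{c_1}+b^{c_1}b^{1-c_1}\big)=b^{-(1-c_1)}a^{-c_1}(a+b),$$
where the cancellation of the exponents $c_1$ and $1-c_1$ is exactly what collapses the bracket into $a+b$. This is the heart of the matter: the splitting of the step into the internal node at $c_1$ and the remaining fraction $1-c_1$ is precisely balanced so that the two shift contributions recombine into the single denominator $a+b$ appearing in $L$.

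With this factorization in hand, I would verify the equivalent identity $\hat{L}_1=L\circ\hat{L}_2$ rather than invert $\hat{L}_2$ explicitly. Computing,
$$L(hD)\,\hat{L}_2(hD)=(a-b)(a+b)^{-1}\,b^{-(1-c_1)}a^{-c_1}(a+b)=(a-b)\,b^{-(1-c_1)}a^{-c_1}=\hat{L}_1(hD),$$
where $(a+b)^{-1}(a+b)=I$ by commutativity. Since $\hat{L}_2$ is invertible on the relevant frequencies under the non-resonance condition \eqref{numerical non-resonance cond}, this yields $\hat{L}=\hat{L}_1\circ\hat{L}_2^{-1}=L$, which is \eqref{LEL}; the claim that $\hat{L}(hD)$ then shares the Taylor series of Proposition \ref{lhd pro} is immediate. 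I expect no genuine obstacle here: the only points requiring care are the bookkeeping of the fractional exponents $c_1$ and $1-c_1$ in the factorization of $\hat{L}_2$, and the tacit use of commutativity when cancelling $(a+b)^{-1}(a+b)$.
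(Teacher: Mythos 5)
Your proof is correct and is essentially the paper's own argument: the paper states the proposition with only the remark that it ``can be checked easily,'' and your computation---writing everything in the commuting symbols $a=e^{hD}$, $b=e^{\mathrm{i}h\tilde{\Omega}}$, factoring $\hat{L}_2(hD)=b^{-(1-c_1)}a^{-c_1}(a+b)$, and concluding $L\circ\hat{L}_2=\hat{L}_1$, hence $\hat{L}=\hat{L}_1\circ\hat{L}_2^{-1}=L$---is exactly the omitted verification. Your side remarks (commutativity of $D$ with the constant matrix $\tilde{\Omega}$, and invertibility of $\hat{L}_2$ reducing to that of $a+b$, which is already presupposed in the definition of $L$) are the right points of care.
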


\subsection{Modulated Fourier expansion}
For the EI-O integrator \eqref{new-im integ one-stage},  we assume
that the modulated Fourier expansions  of $\tilde{Y}^{n+c_1}$ and
$\tilde{y}^{n}$ are
\begin{equation}
\begin{aligned}
& \tilde{Y}_h(t+c_1h)=
\tilde{\Upsilon}(t+c_1h)+\sum\limits_{k\in\mathcal{N}^*}
\mathrm{e}^{\mathrm{i}(k \cdot \tilde{\omega})  (t+c_1h)}\tilde{\Upsilon}^k(t+c_1h),\\
 &\tilde{y}_{h}(t)=\tilde{\zeta}(t)+\sum\limits_{k\in\mathcal{N}^*} \mathrm{e}^{\mathrm{i}(k \cdot \tilde{\omega})
t}\tilde{\zeta}^k(t),\
\end{aligned}
\label{3VX}%
\end{equation}
respectively, where $t=nh$.  Considering the scheme of the EI-O
integrator \eqref{new-im integ one-stage}, we have
\begin{equation*}
\begin{array}[c]{ll}
\tilde{Y}^{n+c_1}&=e^{c_1 \mathrm{i}h\tilde{\Omega}}\tilde{y}^n+
 a_{11}(\mathrm{i}h\tilde{\Omega}) b_{1}^{-1}(\mathrm{i}h\tilde{\Omega})(\tilde{y}^{n+1}-e^{ \mathrm{i}h\tilde{\Omega}}\tilde{y}_n)\\
 &=\frac{1}{2}b_{1}^{-1}(\mathrm{i}h\tilde{\Omega})\tilde{y}^{n+1}+(e^{c_1 \mathrm{i}h\tilde{\Omega}} -\frac{1}{2}
 b_{1}^{-1}(\mathrm{i}h\tilde{\Omega})e^{ \mathrm{i}h\tilde{\Omega}})\tilde{y}_n \\
 &=\frac{1}{2} e^{-(1-c_1)\mathrm{i}h\tilde{\Omega}} \tilde{y}^{n+1}+(e^{c_1 \mathrm{i}h\tilde{\Omega}} e^{(1-c_1)\mathrm{i}h\tilde{\Omega}} -\frac{1}{2}
e^{ \mathrm{i}h\tilde{\Omega}})e^{-(1-c_1)\mathrm{i}h\tilde{\Omega}} \tilde{y}_n \\
 &=\frac{1}{2} e^{-(1-c_1)\mathrm{i}h\tilde{\Omega}} \tilde{y}^{n+1}+\frac{1}{2} e^{ c_1\mathrm{i}h\tilde{\Omega}}
 \tilde{y}_n,
\end{array}
\end{equation*}
where the condition \eqref{rev cond} is used here. Inserting the
modulated Fourier expansions into these equations, we obtain
\begin{equation}\label{newreal2}
\begin{array}[c]{ll}
\tilde{Y}_h(t+c_1h)&=\frac{1}{2}
\Big(e^{-(1-c_1)\mathrm{i}h\tilde{\Omega}}
\tilde{y}_h(t+c_1h+(1-c_1)h)+\frac{1}{2} e^{
c_1\mathrm{i}h\tilde{\Omega}} \tilde{y}_h(t+c_1h-c_1h)\Big).
\end{array}
\end{equation}
 Changing the
time from $t+c_1h$ to $t$ yields \begin{equation}\label{new LL REAL
0}
 \tilde{Y}_h(t)=\frac{1}{2}\hat{L}_2(hD)\tilde{y}_h(t),
 \end{equation}
 which leads to
\begin{equation}\label{newreal23}
\begin{array}[c]{ll}
&\tilde{\Upsilon}(t)=\frac{1}{2}\hat{L}_2(hD)\tilde{\zeta}(t),\\
&\tilde{\Upsilon}^k(t)=\frac{1}{2}\hat{L}_2(hD+\mathrm{i}h(k\cdot\tilde{\omega}))\tilde{\zeta}^k(t).
\end{array}
\end{equation}
As an example of this connection, one has
\begin{equation}\label{newreal234}
\begin{array}[c]{ll}
&\tilde{\Upsilon}_{-j}^{-\langle
j\rangle}(t)=\tilde{\zeta}_{-j}^{-\langle
j\rangle}(t)+\mathcal{O}(h),\\
&\tilde{\Upsilon}_{j}^{\langle
j\rangle}(t)=\tilde{\zeta}_{j}^{\langle j\rangle}(t)+\mathcal{O}(h),
\end{array}
\end{equation}
for $j=1,\ldots,l$, which will be used in the next subsection.

On the other hand, by the definition of $\hat{L}_1(hD)$,   the
second equality of \eqref{new-im integ one-stage} can be expressed
as
\begin{equation}\label{new LL REAL}
 \begin{array}[c]{ll}
\hat{L}_1(hD)\tilde{y}_h(t)=h\tilde{F}(\tilde{Y}_h(t)).
 \end{array}\end{equation}
Combining \eqref{new LL REAL 0} with \eqref{new LL REAL} implies
\begin{equation}\label{new LL REAL 2}
 \begin{array}[c]{ll}
\frac{2}{h}\hat{L}(hD)\tilde{Y}_h(t)=\tilde{F}(\tilde{Y}_h(t)).
 \end{array}\end{equation}
Therefore, it is obtained
\begin{equation}\label{3zeta expre}
\begin{aligned}
 &\hat{L}(hD) \tilde{\Upsilon}^0=\frac{h^2}{4}\Big(\tilde{F}(\tilde{\Upsilon}^0)+
\sum\limits_{s(\alpha)\sim
0}\frac{1}{m!}\tilde{F}^{(m)}(\tilde{\Upsilon}^0)(\tilde{\Upsilon})^{\alpha}\Big),\\
 &  \hat{L}(hD+\mathrm{i}(k \cdot \tilde{\omega})h) \tilde{\Upsilon}^k=\frac{h^2}{4}
\sum\limits_{s(\alpha)\sim
k}\frac{1}{m!}\tilde{F}^{(m)}(\tilde{\Upsilon}^0)(\tilde{\Upsilon})^{\alpha},
\end{aligned} %
\end{equation}
which gives   the  modulation system for the coefficients
$\tilde{\Upsilon}^k$.  The modulation system  for the coefficient
 $\tilde{\zeta}^k$ can be obtained by considering \eqref{newreal23}.

\begin{rem}\label{rem}
It can be observed  that the formula \eqref{3zeta expre} is quite
similar to  \eqref{zeta expre}.   Therefore, with the property
\eqref{LEL}, a result similar to Theorem \ref{energy thm} about the
bounds of the coefficient functions  $\tilde{\Upsilon}^k$ can be
obtained. Then the bounds of the coefficient functions
$\tilde{\zeta}^k$ can be derived  by considering \eqref{newreal23}.
Therefore, the modulated Fourier expansions of EI-O integrators
\eqref{new-im integ one-stage} are formulated as follows.
\end{rem}

 \begin{theo}\label{energy thm-5}
 Under the conditions given in
Assumption \ref{ass}  and  for $0 \leq t=nh \leq T$, the EI-O
integrators \eqref{new-im integ one-stage} with the condition
\eqref{rev cond} admit the following modulated Fourier expansions
\begin{equation*}
\begin{aligned}
&\tilde{Y}^{n+c_1}=\tilde{\Upsilon}(t+c_1h)+\sum\limits_{k\in\mathcal{N}^*}
\mathrm{e}^{\mathrm{i}(k \cdot \tilde{\omega}) (t+c_1h)}
\tilde{\Upsilon}^k(t+c_1h)+\mathcal{O}(th^{N-1}),\\
&\tilde{y}^{n}=\tilde{\zeta}(t)+\sum\limits_{k\in\mathcal{N}^*}
\mathrm{e}^{\mathrm{i}(k \cdot \tilde{\omega})
t}\tilde{\zeta}^k(t)+\mathcal{O}(th^{N-1}),
\end{aligned}
\end{equation*}
 where  the coefficient functions $\tilde{\Upsilon}^k$ as well as all their derivatives
 have the same
bounds as \eqref{coefficient func}. The relationship between
$\tilde{\Upsilon}^k$ and $\tilde{\zeta}^k$ is given by
\eqref{newreal23}. For the EI-O integrators \eqref{im integ
one-stage}, their modulated Fourier expansions are given by
\begin{equation*}
\begin{aligned}
& Y^{n+c_1}=\Upsilon(t+c_1h)+\sum\limits_{k\in\mathcal{N}^*}
\mathrm{e}^{\mathrm{i}(k \cdot \tilde{\omega}) (t+c_1h)}
\Upsilon^k(t+c_1h)+\mathcal{O}(th^{N-1}),\\  &y^{n}=
\zeta(t)+\sum\limits_{k\in\mathcal{N}^*} \mathrm{e}^{\mathrm{i}(k
\cdot \tilde{\omega}) t} \zeta^k(t)+\mathcal{O}(th^{N-1}),
\end{aligned}
\end{equation*}
where $t=nh$, $\Upsilon^k=P\tilde{\Upsilon}^k$ and
$\zeta^k=P\tilde{\zeta}^k$.
\end{theo}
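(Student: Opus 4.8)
The plan is to reduce everything to the analysis already carried out for EI-T in Theorem \ref{energy thm}. The decisive fact is Proposition \ref{new-lhd pro}, which asserts $\hat{L}(hD)=L(hD)$: the operator governing the EI-O modulation system \eqref{3zeta expre} is \emph{identical} to the one governing the EI-T modulation system \eqref{zeta expre}. Since the right-hand sides of \eqref{3zeta expre} have the same nonlinear structure as those of \eqref{zeta expre}, namely a leading term plus a sum over multi-indices $\alpha$ with $s(\alpha)\sim k$ weighted by a positive power of $h$, the whole construction of Theorem \ref{energy thm} transfers once $\tilde{\zeta}^k,\ \tilde{g}$ are replaced by $\tilde{\Upsilon}^k,\ \tilde{F}$.

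Concretely, since $\hat{L}$ shares the Taylor series of Proposition \ref{lhd pro} by \eqref{LEL}, the ansatz for $\tilde{\Upsilon}^k$ takes exactly the form \eqref{ansatz} with $\tilde{\zeta}$ replaced by $\tilde{\Upsilon}$: the diagonal modes $\dot{\tilde{\Upsilon}}_0$ and $\dot{\tilde{\Upsilon}}_{\pm j}^{\pm\langle j\rangle}$ are governed by the invertible leading coefficient, while the off-diagonal modes acquire a factor $\tfrac{1}{2}h/\tan(\cdot)\,\mathrm{i}$ which, under the numerical non-resonance condition \eqref{numerical non-resonance cond}, is of size $\mathcal{O}(\sqrt h)$. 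Solving \eqref{3zeta expre} order by order in $\sqrt h$ then reproduces the bounds \eqref{coefficient func} for the $\tilde{\Upsilon}^k$. The initial values are fixed, as in Theorem \ref{energy thm}, by requiring the expansion to reproduce $\tilde{y}^0$ at $t=0$, and the defect estimate $\mathcal{O}(th^{N-1})$ follows from Lipschitz continuity of the nonlinearity together with the standard convergence argument.

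Next I would pass from $\tilde{\Upsilon}^k$ to $\tilde{\zeta}^k$ using the relation \eqref{newreal23}, that is $\tilde{\Upsilon}^k=\tfrac12\hat{L}_2(hD+\mathrm{i}h(k\cdot\tilde{\omega}))\tilde{\zeta}^k$. Because $\hat{L}_2$ is a sum of exponentials of the skew-Hermitian matrix $h\tilde{\Omega}$ composed with shift operators, it is bounded uniformly in $h$ and equals the identity to leading order; hence it preserves the order bounds, and for the dominant modes one recovers precisely \eqref{newreal234}. This shows that $\tilde{\zeta}^k$ obeys the same bounds \eqref{coefficient func}. Finally, applying the unitary $P$ and setting $\Upsilon^k=P\tilde{\Upsilon}^k$, $\zeta^k=P\tilde{\zeta}^k$ gives the expansions in the original variables, with unchanged bounds since $P$ is unitary.

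The main obstacle I anticipate is the two-layer coupling absent in the EI-T case: the nonlinearity in \eqref{3zeta expre} is evaluated at the internal stage $\tilde{\Upsilon}$, whereas the quantity one ultimately controls is $\tilde{\zeta}$, and the two are linked only through $\hat{L}_2$. Care is therefore needed to verify that the order-by-order solvability and the propagation of bounds remain consistent when the modulation system for $\tilde{\zeta}^k$ is recovered from that of $\tilde{\Upsilon}^k$ via \eqref{newreal23}; in particular one must check that inverting $\hat{L}_2$ does not degrade the powers of $\sqrt h$ recorded in \eqref{coefficient func}. Once this compatibility is secured, the remainder is a routine repetition of the EI-T proof.
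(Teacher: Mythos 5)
Your proposal follows essentially the same route as the paper: the paper's ``proof'' is precisely the construction of Section \ref{sec: Long-time of scheme 2} culminating in Remark \ref{rem} --- use the condition \eqref{rev cond} to express the internal stage as $\tilde{Y}_h=\frac12\hat{L}_2(hD)\tilde{y}_h$, derive the modulation system \eqref{3zeta expre} governed by $\hat{L}(hD)$, invoke Proposition \ref{new-lhd pro} ($\hat{L}(hD)=L(hD)$) to transplant the EI-T analysis of Theorem \ref{energy thm} onto the $\tilde{\Upsilon}^k$, transfer the bounds to $\tilde{\zeta}^k$ via \eqref{newreal23}, and map back with the unitary $P$. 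Your version is in fact somewhat more careful than the paper's, since you explicitly flag the need to invert $\hat{L}_2$ without degrading the $\sqrt{h}$-powers in \eqref{coefficient func} (a point the paper passes over silently), noting only that your claim that $\tfrac12\hat{L}_2$ is the identity to leading order holds for the dominant modes $k=\pm\langle j\rangle$ as in \eqref{newreal234}, not for all $k$.
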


\subsection{Long-time  conservation results}

By the same way as stated in Section \ref{sec: Long-time of scheme
1}, we can  derive two almost invariants of the EI-O integrators
\eqref{im integ one-stage}. Based on these results,   the long-time
conservation results can be obtained.  In what follows, we only
present the results   and skip all the proofs for brevity.

\begin{theo}\label{new first invariant thm}
 Letting
$\vec{\tilde{\Upsilon}}= \big( \tilde{\Upsilon}^{k}\big)_{k\in
\mathcal{N}}$ and under the conditions of  Assumption \ref{ass} and
\eqref{rev cond}, there exist two functions
$\widehat{\mathcal{H}}[\vec{\tilde{\Upsilon}}]$ and
$\widehat{\mathcal{M}}[\vec{\tilde{\Upsilon}}]$  such that
\begin{equation}\begin{aligned}
&\widehat{\mathcal{H}}[\vec{\tilde{\Upsilon}}](t)=\widehat{\mathcal{H}}[\vec{\tilde{\Upsilon}}](0)+\mathcal{O}(th^{N}),\\
&
\widehat{\mathcal{M}}[\vec{\tilde{\Upsilon}}](t)=\widehat{\mathcal{M}}[\vec{\tilde{\Upsilon}}](0)+\mathcal{O}(th^{N})
\label{newHH}%
\end{aligned}\end{equation}
for $0\leq t\leq T.$ Moreover,   they  can  be expressed as
\begin{equation}\begin{aligned}
\widehat{\mathcal{H}}[\vec{\tilde{\Upsilon}}]=&\frac{1}{2}
\sum\limits_{j=-l,j\neq0}^l \Big(\tilde{\omega}_j
\big(\tilde{\Upsilon}_{-j}^{-\langle
j\rangle}\big)^\intercal\tilde{\Upsilon}_{-j}^{-\langle
j\rangle}+\tilde{\omega}_j \big(\tilde{\Upsilon}_{j}^{\langle
j\rangle}\big)^\intercal\tilde{\Upsilon}_{j}^{\langle j\rangle}\Big)
+V(P^\textup{H}\tilde{\Upsilon})+\mathcal{O}(h),\\
\widehat{\mathcal{M}}[\vec{\tilde{\Upsilon}}] =&\frac{1}{2}
\sum\limits_{j=-l,j\neq0}^l \Big(\tilde{\omega}_j
\big(\tilde{\Upsilon}_{-j}^{-\langle
j\rangle}\big)^\intercal\tilde{\Upsilon}_{-j}^{-\langle
j\rangle}+\tilde{\omega}_j \big(\tilde{\Upsilon}_{j}^{\langle
j\rangle}\big)^\intercal\tilde{\Upsilon}_{j}^{\langle
j\rangle}\Big)+\mathcal{O}(h).
\label{new HH def}%
\end{aligned}
\end{equation}
In the light of   \eqref{newreal234},  these two almost invariants
can be expressed
\begin{equation}\begin{aligned}
\widehat{\mathcal{H}}=&\frac{1}{2} \sum\limits_{j=-l,j\neq0}^l
\Big(\tilde{\omega}_j \big(\tilde{\zeta}_{-j}^{-\langle
j\rangle}\big)^\intercal\tilde{\zeta}_{-j}^{-\langle
j\rangle}+\tilde{\omega}_j \big(\tilde{\zeta}_{j}^{\langle
j\rangle}\big)^\intercal\tilde{\zeta}_{j}^{\langle j\rangle}\Big)
+V(P^\textup{H}\tilde{\zeta})+\mathcal{O}(h),\\
\widehat{\mathcal{M}} =&\frac{1}{2} \sum\limits_{j=-l,j\neq0}^l
\Big(\tilde{\omega}_j \big(\tilde{\zeta}_{-j}^{-\langle
j\rangle}\big)^\intercal\tilde{\zeta}_{-j}^{-\langle
j\rangle}+\tilde{\omega}_j \big(\tilde{\zeta}_{j}^{\langle
j\rangle}\big)^\intercal\tilde{\zeta}_{j}^{\langle
j\rangle}\Big)+\mathcal{O}(h).
\label{neww HH def}%
\end{aligned}
\end{equation}
\end{theo}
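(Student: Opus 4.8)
The plan is to mirror the two-step argument used for EI-T in Theorems \ref{first invariant thm} and \ref{2 invariant thm}, exploiting the fact that, by Proposition \ref{new-lhd pro}, the composite operator $\hat{L}(hD)$ coincides with $L(hD)$ and hence carries the identical Taylor expansion of Proposition \ref{lhd pro}. The starting point is the relation \eqref{new LL REAL 2}; after multiplying by $P$, transforming back, inserting the ansatz and collecting the harmonic $\mathrm{e}^{\mathrm{i}(k\cdot\tilde{\omega})t}$, one obtains for each $k\in\mathcal{N}$
\begin{equation*}
P\hat{L}(hD+\mathrm{i}h(k\cdot\tilde{\omega}))P^\textup{H}\,\Upsilon^k_h(t)=-\tfrac{h}{2}\nabla_{\Upsilon^{-k}}\mathcal{V}(\vec{\Upsilon}(t))+\mathcal{O}(h^{N+1}),
\end{equation*}
with a potential $\mathcal{V}$ built from $V$ exactly as in \eqref{newuu} but in the \emph{stage} variables $\Upsilon^k$. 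This is the precise analogue of \eqref{lhdy}, now with $\hat{L}=L$.

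For the energy almost-invariant I would multiply this identity by $(\dot{\Upsilon}^{-k}_h)^\intercal$, sum over $k\in\mathcal{N}$, and add $\frac{d}{dt}\mathcal{V}$; switching to the $\tilde{\Upsilon}$ variables as in \eqref{duu-new1} produces a sum of terms of the form $(\dot{\overline{\tilde{\Upsilon}}}^k)^\intercal L(hD+\mathrm{i}h(k\cdot\tilde{\omega}))\tilde{\Upsilon}^k$ and $(\mathrm{i}(k\cdot\tilde{\omega})\overline{\tilde{\Upsilon}^k})^\intercal L(hD+\mathrm{i}h(k\cdot\tilde{\omega}))\tilde{\Upsilon}^k$. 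Invoking the ``magic formulas'' on p.~508 of \cite{hairer2006} together with the Taylor series of $L$ shows that the imaginary part of each such term is a total time derivative, so the whole right-hand side equals $\frac{d}{dt}\widehat{\mathcal{H}}[\vec{\tilde{\Upsilon}}]+\mathcal{O}(h^{N})$; integrating over $[0,t]$ gives the first line of \eqref{newHH}. For the kinetic-energy almost-invariant I would instead use that $\mathcal{V}$ is independent of an overall phase $\lambda$ (its summation is restricted to $s(\alpha)\sim0$), differentiate at $\lambda=0$ exactly as in \eqref{almost in 2}, and again recognise the resulting expression as $\frac{d}{dt}\widehat{\mathcal{M}}[\vec{\tilde{\Upsilon}}]+\mathcal{O}(h^{N})$, yielding the second line of \eqref{newHH}.

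The explicit leading forms \eqref{new HH def} are then read off from the coefficient bounds of Theorem \ref{energy thm-5}: since $\tilde{\Upsilon}^k$ obeys the same estimates \eqref{coefficient func} as $\tilde{\zeta}^k$, only the components $\tilde{\Upsilon}^{\pm\langle j\rangle}_{\pm j}=\mathcal{O}(\sqrt{h})$ survive at leading order, the remaining harmonics being of higher order in $\sqrt{h}$ and absorbed into the $\mathcal{O}(h)$ term, while the potential contributes $V(P^\textup{H}\tilde{\Upsilon})$ to $\widehat{\mathcal{H}}$ alone. Finally, to pass to \eqref{neww HH def} I would substitute the stage-to-step relation \eqref{newreal234}, $\tilde{\Upsilon}^{\pm\langle j\rangle}_{\pm j}=\tilde{\zeta}^{\pm\langle j\rangle}_{\pm j}+\mathcal{O}(h)$; because these quantities enter quadratically and are themselves $\mathcal{O}(\sqrt{h})$, the replacement perturbs each almost-invariant only by $\mathcal{O}(h)$, which is already present.

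I expect the main obstacle to be bookkeeping rather than conceptual: confirming that the total-derivative structure is preserved when the entire computation is carried out in the internal-stage variables $\tilde{\Upsilon}$ instead of the numerical solution $\tilde{\zeta}$, and checking via \eqref{newreal23} that the passage between the two does not spoil the $\mathcal{O}(th^{N})$ invariance. The only genuinely new ingredient, the identity $\hat{L}=L$, is already furnished by Proposition \ref{new-lhd pro}, so the magic-formula verification is word-for-word that of the EI-T case.
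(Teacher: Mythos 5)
Your proposal is correct and takes essentially the same approach as the paper: the paper omits this proof entirely, stating only that it proceeds ``by the same way as stated in Section \ref{sec: Long-time of scheme 1}'', and your argument is precisely that route---the analogue of \eqref{lhdy} obtained from \eqref{new LL REAL 2} via Proposition \ref{new-lhd pro} ($\hat{L}=L$), the magic-formula/total-derivative argument for both almost-invariants, the leading-order forms from the bounds in Theorem \ref{energy thm-5}, and the passage to the $\tilde{\zeta}$ variables through \eqref{newreal234}.
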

We are now in the position to  present the main results of EI-O
integrators.
\begin{theo} \label{2 sym Long-time
thm} It is assumed that all the conditions of Theorem \ref{new first
invariant thm} are satisfied. Then for the long time energy and
kinetic energy conservations of EI-O integrators, we have
  \begin{equation*}\begin{aligned}
&H(y^n)=H(y^0)+\mathcal{O}(h),\\ &K(y^n)=K(y^0)+\mathcal{O}(h)
\end{aligned}\end{equation*}
for $0\leq nh\leq h^{-N+1}.$ The constants symbolized by
$\mathcal{O}$ depend on $N, T$ and the constants in the assumptions,
but are independent of $n,\ h,\ \epsilon$.
\end{theo}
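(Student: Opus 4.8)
The plan is to mirror the two-step scheme used for EI-T in Theorems~\ref{first invariant thm}--\ref{no sym Long-time thm}, now feeding in the two almost-invariants $\widehat{\mathcal{H}}[\vec{\tilde{\Upsilon}}]$ and $\widehat{\mathcal{M}}[\vec{\tilde{\Upsilon}}]$ supplied by Theorem~\ref{new first invariant thm}. First I would show that each almost-invariant agrees with the corresponding true invariant of the numerical solution up to $\mathcal{O}(h)$. Inserting the modulated Fourier expansion of Theorem~\ref{energy thm-5} into $H(y^n)=\tilde{H}(\tilde{y}^n)$ and $K(y^n)=\tilde{K}(\tilde{y}^n)$ and using the coefficient bounds~\eqref{coefficient func}, every product of coefficients indexed by $k\neq\pm\langle j\rangle$ contributes at order $\mathcal{O}(h)$ or smaller, so only the dominant modes $\tilde{\zeta}_{\pm j}^{\pm\langle j\rangle}$ and the smooth part $\tilde{\zeta}$ survive at leading order. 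This yields
\begin{equation*}
H(y^n)=\frac{1}{2}\sum\limits_{j=-l,j\neq0}^{l}\Big(\tilde{\omega}_j\big(\tilde{\zeta}_{-j}^{-\langle j\rangle}\big)^\intercal\tilde{\zeta}_{-j}^{-\langle j\rangle}+\tilde{\omega}_j\big(\tilde{\zeta}_{j}^{\langle j\rangle}\big)^\intercal\tilde{\zeta}_{j}^{\langle j\rangle}\Big)+V(P^\textup{H}\tilde{\zeta}(t))+\mathcal{O}(h),
\end{equation*}
together with the analogous identity for $K(y^n)$ that drops the potential term, and these match exactly the expressions~\eqref{neww HH def}. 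Hence $\widehat{\mathcal{H}}=H(y^n)+\mathcal{O}(h)$ and $\widehat{\mathcal{M}}=K(y^n)+\mathcal{O}(h)$ at every grid point where the expansion is constructed.

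The second and decisive step is to convert the short-time near-invariance~\eqref{newHH}, valid only on intervals of length $T$, into conservation over the long interval $[0,h^{-N+1}]$. I would repeat the construction of the modulated Fourier expansion on the successive windows $[t_m,t_m+T]$, each time restarting from the numerical value $\tilde{y}^{\,m}$ as new initial data. Because the expansion coefficients are determined by the numerical solution up to the remainder order, the almost-invariant computed on one window and the one computed on the next coincide at the common endpoint up to $\mathcal{O}(h^{N})$, so the windowed drifts telescope rather than compound. Over the total time $nh=h^{-N+1}$ there are $\mathcal{O}(h^{-N+1}/T)$ windows, on each of which~\eqref{newHH} gives a change of size $\mathcal{O}(Th^{N})$, so the accumulated change in $\widehat{\mathcal{H}}$ (and likewise in $\widehat{\mathcal{M}}$) is
\begin{equation*}
\mathcal{O}\!\Big(\frac{h^{-N+1}}{T}\Big)\cdot\mathcal{O}(Th^{N})=\mathcal{O}(h).
\end{equation*}

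Combining the two steps closes the argument: writing $t=nh$, one obtains $H(y^n)=\widehat{\mathcal{H}}(t)+\mathcal{O}(h)=\widehat{\mathcal{H}}(0)+\mathcal{O}(h)=H(y^0)+\mathcal{O}(h)$, and in the same way $K(y^n)=K(y^0)+\mathcal{O}(h)$, for $0\leq nh\leq h^{-N+1}$. Tracking the constants through the two steps shows that they depend only on $N$, $T$ and the constants of Assumption~\ref{ass}, and are independent of $n$, $h$ and $\epsilon$. I expect the genuine obstacle to be the patching estimate in the second step, namely verifying that the restarted modulated Fourier coefficients match the previous ones closely enough that the endpoint values of $\widehat{\mathcal{H}}$ connect with an error no larger than the windowed drift; this is precisely the mechanism of Section~XIII of~\cite{hairer2006}, and it relies on the reversibility/symplecticity condition~\eqref{rev cond} that guarantees the existence of the two almost-invariants in Theorem~\ref{new first invariant thm}. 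Once that matching is in place, the summation and the comparison with the true invariants are routine.
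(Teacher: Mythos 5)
Your proposal takes essentially the same route as the paper: the paper skips the proof of this theorem ``for brevity,'' stating that it follows the same way as Section \ref{sec: Long-time of scheme 1}, and that template is exactly your two-step argument --- first matching the almost-invariants of Theorem \ref{new first invariant thm} with $H(y^n)$ and $K(y^n)$ up to $\mathcal{O}(h)$ via the coefficient bounds (the analogue of \eqref{HIE} compared with \eqref{neww HH def}), then extending from $[0,T]$ to $0\leq nh\leq h^{-N+1}$ by the window-patching/telescoping mechanism of Section XIII of \cite{hairer2006}. The only difference is that you spell out the telescoping estimate explicitly, which the paper leaves implicit.
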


\section{Conclusions} \label{sec:conclusions}
%%%%%%%%%%%%%%%%%%%%%%%%%%%%%%%%%%%%%%%%%%%%%%%%%%%%%%%%%%%%%%%%%%%%%%%%%%%%%%%%%%%%%%%%%%%%%%%%%%%%%%%%%%%%%%%%%%%%%%%%%%%%%%%%%%%%%%%%%%%%%%%%%%%%%%%%%%%%%%%%%%%%%%%%%%%%%%%%%%%%%%%%%%%%%%%%%%%%%%%%
In this paper, we have studied  the long-time  energy and kinetic
energy near-conservations of
  exponential integrators for
solving  highly oscillatory  conservative systems.
 Two kinds of exponential integrators have been presented and their
modulated Fourier expansions have been developed.  By using the
technique of modulated Fourier expansions,  it is proved   that the
symmetric EI-T and the symplectic EI-O integrators  approximately
conserve the energy and kinetic energy  over long times.

Last but not least, it is noted that we  have tried to derive the
long time result for explicit exponential integrators.
Unfortunately, it does not work since  the  operator  $ L(hD)$
determined  by  explicit exponential integrators does not have good
property. Although implicit exponential integrators need more
computation  in comparison with explicit schemes, they are indeed
used and analysed by many publications (see
\cite{Cano15,Celledoni-2008,Cohen12,Dujardin09}).

%\newpage

\end{document}